\documentclass[10pt,fleqn]{amsproc}

\usepackage[utf8]{inputenc}
\usepackage[T1]{fontenc}
\usepackage{lmodern}

\usepackage{amsmath,amssymb,amsthm}
\usepackage{mathtools}
\usepackage{mathrsfs}
\usepackage{graphicx}

\usepackage{tikz}
\usepackage{tikz-cd}

\usetikzlibrary{arrows.meta,calc,decorations.markings,intersections}
\usepackage{xcolor}
\usepackage{hyperref}
\hypersetup{colorlinks=true,linkcolor=magenta,citecolor=blue,urlcolor=blue}

\usepackage{enumitem}

\newcommand{\textupbf}[1]{\textbf{\textup{#1}}}

\allowdisplaybreaks

\SetEnumitemKey{auto}{label=\textupbf{(\arabic*)},ref=(\arabic*),leftmargin=*}
\SetEnumitemKey{autoarabic}{label=\textupbf{(\arabic*)},ref=(\arabic*),leftmargin=*}
\SetEnumitemKey{autoalpha}{label=\textupbf{(\alph*)},ref=(\alph*),leftmargin=*}
\SetEnumitemKey{autoroman}{label=\textupbf{(\roman*)},ref=(\roman*),leftmargin=*}
\SetEnumitemKey{autoAlpha}{label=\textupbf{(\Alph*)},ref=(\Alph*),leftmargin=*}
\SetEnumitemKey{autoRoman}{label=\textupbf{(\Roman*)},ref=(\Roman*),leftmargin=*}

\newcommand{\rmop}[1]{\operatorname{#1}}

\makeatletter

\newenvironment{bookitalicstatement}[3]{%
  \par\medskip\phantomsection\def\@currentlabel{#2}\label{#3}%
  \noindent\textbf{#1 #2.}\;\itshape\ignorespaces}{\par\medskip}

\makeatother

\numberwithin{equation}{section}
\numberwithin{figure}{section}

\newtagform{bold}{\bfseries(}{\bfseries)}
\usetagform{bold}

\newcommand{\purplebfeq}[1]{\textcolor{purple!80!black}{\bfseries(#1)}}
\newtagform{bfpurple}[\purplebfeq]{}{}
\usetagform{bfpurple}

\newcommand{\frakP}{\mathfrak{P}}

\newcommand{\bbF}{\mathbb{F}}

\newcommand{\bbQ}{\mathbb{Q}}

\newcommand{\bbZ}{\mathbb{Z}}

\newcommand{\calH}{\mathcal{H}}

\newcommand{\calO}{\mathcal{O}}

\newcommand{\scrL}{\mathscr{L}}

\newcommand{\bfk}{\mathbf{k}}

\newcommand{\rmH}{\mathrm{H}}

\title[Small degree isogenies between conjugate surfaces]{Small degree isogenies between conjugate principally polarized superspecial abelian surfaces}
\author{Lam L. Pham}

\theoremstyle{plain}

\newtheorem{theorem}{Theorem}

\newtheorem{proposition}{Proposition}
\newtheorem*{corollary}{Corollary}

\DeclareMathOperator{\Herm}{Herm}
\DeclareMathOperator{\nrd}{nrd}
\DeclareMathOperator{\Nrd}{Nrd}
\DeclareMathOperator{\Trd}{Trd}
\DeclareMathOperator{\trd}{trd}

\newcommand{\medsim}{\ensuremath\mathrel{\scalebox{1.05}{\ensuremath\sim}}}
\newcommand{\stosim}[1][]{\stackrel{\medsim}{\smash{\xrightarrow[#1]{\hspace{0.5em}}}\rule{0pt}{0.2ex}}}

\begin{document}

\begin{abstract}
Let \(p>3\)  be a prime number.
We prove that every principally polarized superspecial abelian surface over \(\bbF_{p^{2}}\) with \(p^{2}\)-Frobenius \([-p]\) admits a separable polarized isogeny to its Frobenius conjugate with multiplier at most \((p^{3}/2)^{1/5}\).
This generalizes a recent result of Aubry, Oyono, and Vincent (2026, arXiV:2607.14624) who proved that every supersingular elliptic curve defined over \(\bar{\bbF}_{p}\) admits an isogeny to its Frobenius conjugate with degree at most \((p/2)^{1/3}\).
\end{abstract}

\maketitle

\section{Introduction and main result}

Let \(p>3\) be a prime, and let \(\bfk/\bfk_{0}=\bbF_{p^{2}}/\bbF_{p}\).
Let \(\sigma:\bfk\to \bfk\), \(\sigma(x)=x^{p}\) be the non-trivial element of \(\rmop{Gal}(\bfk/\bfk_{0})\).
For a \(\bfk\)-scheme \(X\), let \(X^{\sigma}\) be its Galois conjugate.
We have the relative Frobenius morphisms
\[
F_{X}: X \to X^{\sigma},\quad F_{X^{\sigma}}:X^{\sigma} \to X.
\]
Write \(\pi_{X,p^{2}}=F_{X^{\sigma}}\circ F_{X}\) for the \(p^{2}\)-Frobenius endomorphism.
We consider varieties over \(\bfk\) such that \(\pi_{X,p^{2}}=[-p]_{X}\), where \([-p]_{X}\) denotes multiplication by \((-p)\) on the variety \(X\).
A \emph{polarization} is a homomorphism \(\lambda_{X}:X\to X^{\vee}\), where \(X^{\vee}\) denotes the dual abelian variety of \(X\), induced by an ample line bundle, and it is a principal polarization if it is in addition an isomorphism.
An isomorphism of polarized varieties \(u:(Y,\lambda_{Y})\to (Z,\lambda_{Z})\) is an isomorphism satisfying \(u^{\vee}\lambda_{Z} u = \lambda_{Y}\).

For a homomorphism \(u:Y \to Z\) between principally polarized abelian varieties, the \emph{polarized adjoint} is
\[
u^{\dagger}:Z \to Y,\quad u^{\dagger} = \lambda_{Y}^{-1} \circ u^{\vee} \circ \lambda_{Z},
\]
(see \cite{DiemNaumann2003}).
Hence we have a commutative diagram:
\[
\begin{tikzcd}
Z \dar[swap]{\lambda_{Z}} \rar{u^{\dagger}} & Y \dar{\lambda_{Y}}\\
Z^{\vee} \rar[swap]{u^{\vee}} & Y^{\vee}.
\end{tikzcd}
\]
For \(u: Y \to Z\) and \(v: Z \to W\), this implies that
\[
(v \circ u)^{\dagger} = u^{\dagger} \circ v^{\dagger},\quad
(u^{\dagger})^{\dagger}=u.
\]
Acting on \emph{endomorphisms}, \({}^{\dagger}\) is the \emph{Rosati involution}.
Finally, an isogeny \(u:Y \to Z\) has \emph{polarization multiplier} \(N\in \bbZ_{>0}\) if
\[
u^{\vee} \lambda_{Z} u = N \lambda_{Y},\quad\text{or, equivalently},\quad
u^{\dagger}u=[N]_{Y}.
\]
If \(Y\) has dimension \(d\), then \(\deg(u)=N^{d}\) since \(\deg(u^{\dagger})=\deg(u)\).
For surfaces, the degree is therefore \(N^{2}\).


We now state the main theorem.

\begin{theorem}
Let \((A,\lambda)_{/\bfk}\) be a principally polarized superspecial abelian surface whose \(p^{2}\)-Frobenius endomorphism is \([-p]_{A}\), that is, \(F_{A^{\sigma}}\circ F_{A} = [-p]_{A}\).
Then, there exists an integer \(N>0\) and a separable polarized \(\bfk\)-isogeny \(\phi:(A,\lambda) \to (A^{\sigma},\lambda^{\sigma})\) such that
\[
\phi^{\dagger}\phi=[N]_{A},\quad
1\leq N\leq \left(\frac{p^{3}}{2}\right)^{1/5}<p^{3/5}<p,\quad \deg\phi=N^{2}.
\]
\end{theorem}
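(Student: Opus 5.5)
The plan is to reduce the statement to a minimum-norm estimate for a positive definite lattice of rank \(5\), in the spirit of Aubry--Oyono--Vincent. Their bound \((p/2)^{1/3}\) looks like Hermite's inequality on a rank-\(3\) lattice, and our bound \((p^{3}/2)^{1/5}\) looks like Hermite's inequality \(\min \le \gamma_{5}\det^{1/5}\) with \(\gamma_{5}^{5}=8\) on a rank-\(5\) lattice. Concretely, I would try to construct a rank-\(5\) sublattice \(L\subset \rmop{Hom}(A,A^{\sigma})\) with two properties: every \(\phi\in L\) satisfies \(\phi^{\dagger}\phi=[q(\phi)]_{A}\) for a positive definite integral quadratic form \(q\), and the determinant of \((L,q)\) is small enough that \(\gamma_{5}\det^{1/5}\) comes out to \((p^{3}/2)^{1/5}\). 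The exact normalization (\(q\) versus \(2q\), Gram determinant versus covolume) still has to be fixed; a value of \(\det\) of the order of \(p^{3}/16\) is what the stated bound suggests. I have not derived this value, and checking it is part of the main step below. Once \(L\) is available, a shortest nonzero vector \(\phi\) gives \(N=q(\phi)\le (p^{3}/2)^{1/5}<p\), and \(\deg\phi=N^{2}\) follows as in the introduction.

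To build \(L\), I would first fix the standard model. Since \(A\) is superspecial with \(\pi_{A,p^{2}}=[-p]\), we have \(A\simeq E^{2}\) for a supersingular \(E/\bfk\) with Frobenius \(-p\). Then \(\rmop{End}(A)=M_{2}(\calO)\), where \(\calO=\rmop{End}(E)\) is a maximal order in the quaternion algebra \(B_{p,\infty}\) ramified exactly at \(p\) and \(\infty\). The polarization \(\lambda\) corresponds to a positive definite Hermitian matrix \(H\in M_{2}(\calO)\) of reduced norm \(1\), and the Rosati involution is \(x\mapsto H^{-1}\bar{x}^{t}H\).

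Every \(\phi\in\rmop{Hom}(A,A^{\sigma})\) gives an endomorphism \(\psi=F_{A^{\sigma}}\circ\phi\in\rmop{End}(A)\). Using \(F_{A}^{\dagger}F_{A}=[p]\) (which follows from \(F_{A}^{\vee}\lambda^{\sigma}F_{A}=p\lambda\)) together with \(F_{A^{\sigma}}F_{A}=[-p]\), we get \(\psi^{\dagger}\psi=p\,\phi^{\dagger}\phi\). The involution \(\phi\mapsto (\phi^{\sigma})^{\dagger}\) on \(\rmop{Hom}(A,A^{\sigma})\) corresponds under this map to a twisted Rosati-type involution on \(\rmop{End}(A)\).

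I would then take \(L\) to be the eigenlattice of this involution: the \((\pm)\)-symmetric elements, which form a rank-\(6\) lattice like the Hermitian \(2\times 2\) matrices over \(\calO\). From it I would cut out the rank-\(5\) part orthogonal to the identity, the analogue of trace zero. On this \(5\)-dimensional space, the algebra \(M_{2}(B)\) with the Rosati involution behaves like the even Clifford algebra of a definite quadratic space of dimension \(5\). Hence for \(v\in L\) the product \(v^{\dagger}v\) is automatically the scalar \(q(v)\), which is exactly the polarized-multiplier condition.

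The main obstacle is computing \(\det(L,q)\). This requires explicit local computations: at \(p\), \(\calO_{p}\) is the maximal order of the ramified quaternion algebra and \(F\) acts as a uniformizer; at \(2\), one must track the factor \(2\) between the norm and the bilinear form. If this gives the value predicted by \(\gamma_{5}^{5}=8\), Hermite's inequality yields \(1\le N\le (p^{3}/2)^{1/5}<p^{3/5}<p\); the middle inequality holds because \(2^{-1/5}<1\).

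Finally, \(\deg\phi=N^{2}\) is prime to \(p\) because \(N<p\). So \(\ker\phi\) has order prime to \(p\), and \(\phi\) is separable. Since \(L\) consists of \(\bfk\)-homomorphisms, \(\phi\) is defined over \(\bfk\).
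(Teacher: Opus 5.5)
Your outline follows the paper's route. In \(\iota\)-coordinates, \(F_{A^{\sigma}}\circ\rmop{Hom}_{\bfk}(A,A^{\sigma})\) becomes \(\rmop{M}_{2}(\frakP)\). One takes \(\scrL_{h}=\rmop{M}_{2}(\frakP)\cap S_{h}\), the Rosati-symmetric matrices of reduced trace zero, uses \(x^{2}=q_{0}(x)I_{2}\) and \(x^{\dagger_{h}}x=pNI_{2}\Leftrightarrow\phi^{\dagger}\phi=[N]_{A}\), and applies Hermite with \(\gamma_{5}^{5}=8\). (Since \(F_{A^{\sigma}}^{\dagger}=-F_{A}\), one has \((F_{A^{\sigma}}\phi)^{\dagger}=-F_{A^{\sigma}}(\phi^{\sigma})^{\dagger}\), so the eigenvalue you want is \(-1\).)

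\textbf{The gap.} The proposal does not prove the bound. The value \(\Delta(\scrL_{h},q_{h})=p^{3}/16\) is inferred from the desired conclusion, and you say yourself that you have not derived it. That determinant is the actual content of the theorem, and it is not a formality. The rank-\(5\) lattice is the orthogonal complement, inside an indefinite rank-\(6\) lattice, of the vector corresponding to \(pI_{2}\) (that is, to \(-F_{A}\)). Its determinant is governed by an index that enters squared and has to be computed.

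A smaller point: \(v^{\dagger}v=q(v)\) is only asserted by analogy with even Clifford algebras. It is true, but it needs a proof or a precise reference. The paper conjugates by \(C=\begin{psmallmatrix}s&r\\0&1\end{psmallmatrix}\), which satisfies \(C^{*}C=sh\). This carries \(S_{h}\) onto the matrices \(\begin{psmallmatrix}a&b\\ \bar{b}&-a\end{psmallmatrix}\), which square to \((a^{2}+\nrd(b))I_{2}\); this also gives positive definiteness.

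\textbf{What the paper does for the determinant.} Multiplication by \(h\) gives an isometry \(\scrL_{h}\cong K_{h}=\ker\eta_{h}\). Here \(\Lambda=\bbZ\oplus\frakP\oplus\bbZ\), \(Q(a,b,c)=\nrd(b)/p-pac\), and \(\eta_{h}(a,b,c)=ta+sc-\trd(\bar{r}b)/p\), which comes from \(\Trd(h^{-1}\calH(v))=2p\,\eta_{h}(v)\). The discriminant \(p^{2}\) of \(\calO\) gives \(\Delta(\Lambda,Q)=\tfrac{p^{2}}{16}\cdot\bigl(-\tfrac{p^{2}}{4}\bigr)=-p^{4}/64\). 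Two further facts are then needed, and your proposal has nothing in their place.
\begin{enumerate}[auto]
\item \(\eta_{h}\) is surjective. This uses \(\calO^{\#}=\frakP^{-1}=p^{-1}\frakP\), so that \((u,b)\mapsto\trd(\bar{u}b)/p\) is a perfect pairing \(\calO\times\frakP\to\bbZ\), together with \(st-\nrd(r)=1\).
\item The vector \(w=(s,pr,t)\), which satisfies \(\calH(w)=ph\), has \(Q(w)=-p\), \(b_{Q}(v,w)=-\tfrac{p}{2}\eta_{h}(v)\) and \(\eta_{h}(w)=2\).
\end{enumerate}
Hence \([\Lambda:K_{h}\perp\bbZ w]=2\), and \(\Delta(K_{h},Q)=-\tfrac{4}{p}\cdot\bigl(-\tfrac{p^{4}}{64}\bigr)=p^{3}/16\).

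The local computation you suggest (at \(2\) and \(p\), plus unimodularity at the other primes) could replace this. However, it would have to reproduce exactly these factors, and you have not carried it out.
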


This result is the analogue, for principally polarized surfaces, of the result proved by Aubry, Oyono, and Vincent \cite{AubryOyonoVincent2026}.

The proof uses quaternionic coordinates.
We construct a rank five integral quadratic lattice in which we find an appropriate short vector, which we prove can be turned back into the desired isogeny.

\section{Quaternionic coordinates for abelian varieties}

For basic facts about elliptic curves, we refer the reader to the books of Voight \cite[Chapter 42]{Voight2021} and Silverman \cite{Silverman2009}.
For abelian varieties, we refer to the Mumford's book \cite{Mumford2008} and to Milne's article \cite{Milne1986}.

\subsection{Elliptic curves endomorphism rings}

We fix the following model.
Let \(E_{0}{}_{/\bbF_{p}}\) be a fixed supersingular elliptic curve.
We use the following classical facts.
There exists a supersingular elliptic curve \(E_{0}{}_{/\bbF_{p}}\) whose \(p\)-Frobenius \(\pi_{0}\) satisfies \(\pi^{2}=[-p]_{E_{0}}\).
Every supersingular elliptic curve over \(\bar{\bfk}\) has a \(\bfk\)-model with \(p^{2}\)-Frobenius \([-p]\), and the choice of this \(\bfk\)-model follows from \(E_{0}\).
This follows from Deuring \cite{Deuring1941}.

We denote by \(E\) the base change of \(E_{0}\) to \(\bfk\):
\[
E=E_{0} \times_{\bbF_{p}} \bfk,\quad
\pi=\pi_{0}\times_{\bbF_{p}} \bfk \in \rmop{End}_{\bfk}(E).
\]
Then, we have canonically \(E^{\sigma}=E\) with Frobenius \(F_{E}=\pi\).
Let
\[
\calO = \rmop{End}_{\bfk}(E) = \rmop{End}_{\bar{\bfk}}(E_{\bar{\bfk}}),\quad
B=\calO \otimes_{\bbZ}\bbQ.
\]
By Deuring's theorem, \(B=B_{p,\infty}\), the quaternion algebra over \(\bbQ\) ramified exactly at \(\{p,\infty\}\), and \(\calO\) is a maximal order of \(B\).

For an isogeny of elliptic curves \(f:E \to E^{\prime}\) of degree \(n\), there exists a unique dual isogeny \(\hat{f}:E^{\prime} \to E\) such that \(\hat{f}\circ f = [n]_{E}\) and \(f\circ \hat{f}=[n]_{E^{\prime}}\).
This dual isogeny also has degree \(n\).
It is related to the dual abelian variety as follows.
An elliptic curve \(E\) has a canonical principal polarization \(\lambda_{E}:E \to E^{\vee}\).
On the other hand, the dual homomorphism \(f^{\vee}:(E^{\prime})^{\vee} \to E^{\vee}\) satisfies \(\hat{f}=\lambda_{E}^{-1} \circ f^{\vee} \circ \lambda_{E^{\prime}}\), so that \(\hat{f}\) is the polarized adjoint for the canonical polarization.

On \(B\), quaternion conjugation \(a\mapsto \bar{a}\) corresponds on \(\calO\) to the elliptic curve dual, and the reduced norm restricts to degree.
The reduced norm and reduced trace on \(B\) are
\[
\rmop{trd}(a)=a+\bar{a},\quad
\rmop{nrd}(a)=a\bar{a}.
\]
Then, we have
\[
\pi^{2}=-p,\quad
\bar{\pi}=-\pi,\quad
\rmop{nrd}(\pi)=p.
\]
For \(a\in \calO\), we have \(\rmop{nrd}(a)=\deg(a)\).

\subsection{Coordinates for a principally polarized surface}

We will change coordinates for principal polarizations but this applies to arbitrary homomorphisms as follows.
Let \(\phi:A \to A^{\vee}\) be a morphism.
We fix an isomorphism \(\iota:A \stosim E^{2}\) of unpolarized varieties;
it induces an isomorphism of abelian groups
\[
\rmop{Hom}_{\bfk}(A,A^{\vee}) \to \rmop{Hom}_{\bfk}(E^{2},(E^{2})^{\vee}),\quad
\phi\mapsto (\iota^{-1})^{\vee} \phi \iota^{-1}.
\]
The corresponding diagram is:
\[
\begin{tikzcd}
A \dar[swap]{\iota} \rar{\phi} & A^{\vee} \dar{(\iota^{-1})^{\vee}} \\
E^{2} \rar[swap]{\Phi} & (E^{2})^{\vee}
\end{tikzcd}
\]

\medskip

Let \((A,\lambda)\)  be a principally polarized superspecial abelian surface over \(\bfk\).
By Deligne's theorem (see \cite{Ogus1979,Shioda1979}), \(A_{\bar{\bfk}}\) is isomorphic to \(E_{\bar{\bfk}}^{2}\).
Since both \(A\) and \(E^{2}\) have \(p^{2}\)-Frobenius \([-p]\), any such geometric isomorphism commutes with \(p^{2}\)-Frobenius, thus is defined over \(\bfk=\bbF_{p^{2}}\) (see \cite{IbukiyamaKatsura1994}).
We thus fix an unpolarized \(\bfk\)-isomorphism \(\iota:A \to E^{2}\).
Let \(\lambda_{0}:E^{2} \to (E^{2})^{\vee}\) be the \emph{product principal polarization} on \(E^{2}\), that is, \(\lambda_{0}=\lambda_{E}\times \lambda_{E}\), and set
\[
\lambda_{E^{2}}=(\iota^{-1})^{\vee} \circ \lambda \circ \iota^{-1} : E^{2} \to (E^{2})^{\vee}.
\]
This is the unique homomorphism satisfying \(\iota^{\vee} \circ \lambda_{E^{2}} \circ \iota = \lambda\), so the diagram
\[
\begin{tikzcd}
A \dar[swap]{\iota} \rar{\lambda} & A^{\vee} \dar{(\iota^{-1})^{\vee}} \\
E^{2} \rar[swap]{\lambda_{E^{2}}} & (E^{2})^{\vee}
\end{tikzcd}
\]
commutes.
We then define the endomorphism \(h:E^{2}\to E^{2}\) by \(\lambda_{E^{2}}=\lambda_{0}\circ h\), that is,
\[
h=\lambda_{0}^{-1} \circ \lambda_{E^{2}}:E^{2} \to E^{2},\quad h\in \rmop{End}_{\bfk}(E^{2})=\rmop{M}_{2}(\calO).
\]
The endomorphism \(h\) compares these two polarizations on the same underlying unpolarized variety \(E^{2}\).

\subsection{The Rosati involution}

Given \((A,\lambda)\), the Rosati involution is obtained from the polarized adjoint.
Accordingly, the \emph{Rosati involution} associated with \(\lambda\) is
\[
\rmop{End}_{\bfk}(A) \to \rmop{End}_{\bfk}(A),\quad
u\mapsto u^{\dagger_{\lambda}} = \lambda^{-1} u^{\vee} \lambda.
\]
Now, let \(z=\iota u \iota^{-1}\in \rmop{M}_{2}(B)\).
We can compute its adjoint with respect to the transported polarization \(\lambda_{E^{2}}\) and obtain
\[
\lambda_{E^{2}}^{-1} \circ z^{\vee} \circ \lambda_{E^{2}} = \iota \circ (u^{\dagger_{\lambda}})\circ \iota^{-1}.
\]

Recall that for elliptic curves and an endomorphism \(f:E\to E\), the elliptic curve dual isogeny \(\hat{f}\) coincides with the polarized adjoint for the canonical polarization, that is, if \(f\in \rmop{End}_{\bfk}(E)\), then \(\hat{f}=\lambda_{E}^{-1} \circ f^{\vee} \circ \lambda_{E}\), which is precisely the Rosati involution.
Thus, on the quaternion algebra \(B\), if \(a\in B\), then
\[
\lambda_{E}^{-1} \circ a^{\vee} \circ \lambda_{E} = \bar{a}.
\]
Under the identification \((E^{2})^{\vee}\cong (E^{\vee})^{2}\), if \(z\in \rmop{End}_{\bfk}(E^{2})\), writing \(z=(z_{ij})\), we have \((z^{\vee})_{ij}=(z_{ji}^{\vee})\).
Therefore,
\[
\lambda_{0}^{-1} z^{\vee} \lambda_{0} = z^{*}.
\]
Since \(\lambda_{E^{2}}=\lambda_{0} \circ h\), we obtain
\begin{align*}
\iota \circ (u^{\dagger_{\lambda}})\circ \iota^{-1}
 = h^{-1}z^{*}h.
\end{align*}
In our chosen matrix coordinates, the Rosati involution is therefore the map
\[
\dagger_{h}: \rmop{M}_{2}(B) \to \rmop{M}_{2}(B),\quad
z\mapsto z^{\dagger_{h}} = h^{-1}z^{*}h.
\]
Back to \(u:A\to A\), since \(z=\iota u \iota^{-1}\), the relation is
\[
\iota\circ (u^{\dagger_{\lambda}})\circ \iota^{-1} = (\iota \,u\, \iota^{-1})^{\dagger_{h}}.
\]

We observe that it is indeed an involution on \(\rmop{M}_{2}(B)\), since \(x^{\dagger_{h}\dagger_{h}}=x\) (this follows from the fact that \(h\) is hermitian).
This is the \emph{adjoint involution} (see \cite{KnusMerkurjevRostTignol1998}) with respect to the quaternionic hermitian form
\[
H_{h}(v,w) = v^{*}hw,\quad v,w\in B^{2},
\]
where \(B^{2}\) is a right \(B\)-vector space and matrices act on the left.

Then, we have the so-called ``IKO correspondence'' (\cite{IbukiyamaKatsuraOort1986}):
the matrix \(h\) in \(\rmop{M}_{2}(\calO)\) is a positive definite, unimodular quaternionic Hermitian matrix.
\[
h=\begin{bmatrix}
s & r \\ \bar{r} & t
\end{bmatrix},\quad
s, t\in \bbZ_{>0},\quad
r\in \calO,\quad
st-\rmop{nrd}(r)=1.
\]
In particular,
\[
h^{-1}=\begin{pmatrix}
t & -r \\ - \bar{r} & s
\end{pmatrix} \in \rmop{M}_{2}(\calO).
\]
Thus, \(h\in \rmop{SL}_{2}(\calO)\).
Conjugation by \(\iota\) induces the ring isomorphism
\[
\rmop{End}_{\bfk}(A) \to \rmop{M}_{2}(\calO),\quad
u\mapsto \iota\, u \, \iota^{-1}.
\]

\subsection{Galois action and the ramified ideal}

For a \(\bfk\)-morphism \(u:Y \to Z\), naturality of the \emph{relative} Frobenius means that the diagram
\[
\begin{tikzcd}
Y \rar{u} \dar[swap]{F_{Y}} & Z \dar{F_{Z}} \\
Y^{\sigma} \rar[swap]{u^{\sigma}} & Z^{\sigma}
\end{tikzcd}
\]
commutes: \(F_{Z} u = u^{\sigma} F_{Y}\).
For \(a\in \calO\), our chosen \(\bbF_{p}\)-model (from \(E_{0}\)) gives a canonical identification \(E^{\sigma}=E\) with \(F_{E}=\pi\).
The Galois action now gives
\[
\pi a = a^{\sigma} \pi,\quad a^{\sigma} = \pi a \pi^{-1},\quad
\pi \calO \pi^{-1} = \calO.
\]
Therefore, we can define the two-sided ideal
\[
\frakP = \pi \calO = \calO \pi.
\]
Since \(\pi^{2}=-p\), \(\bar{\pi}=-\pi\), and \(\bar{\calO}=\calO\), we obtain
\[
\frakP^{2} = p\calO,\quad \bar{\frakP}=\frakP.
\]
Since \(p\calO=\frakP^{2}\subset \frakP\) and \(1\notin\frakP\) (otherwise \(1=\pi a\) implies \(1=p\nrd(a)\) with \(\nrd(a)\in \bbZ\)), we have \(p\bbZ\subset \frakP\cap \bbZ\subsetneq \bbZ\).
Since the ideal \(p\bbZ\) is maximal in \(\bbZ\), we get \(\frakP\cap \bbZ=p\bbZ\).
And since \(\frakP\subset \calO\) and \(\calO\cap \bbQ=\bbZ\), we get
\[
\frakP\cap \bbQ = p\bbZ.
\]
It follows that for any \(u\in \frakP\),
\[
\rmop{trd}(u)\in \frakP\cap\bbQ=p\bbZ,\quad \rmop{nrd}(u)\in \frakP^{2}\cap \bbQ = p\bbZ.
\]
Finally, multiplication by \(\pi\) induces an isomorphism of additive groups
\(
\calO/\frakP \stosim \frakP / p\calO,
\)
so \([\calO:\frakP]=[\frakP:p\calO]\).
From the inclusions \(p\calO\subset \frakP\subset \calO\), we get
\[
p^{4} = [\calO:p\calO]=[\calO:\frakP]\cdot[\frakP:p\calO] = [\calO:\frakP]^{2}.
\]
Thus, \([\calO:\frakP]=p^{2}\).
In fact, \(\calO/\frakP\cong \bbF_{p^{2}}\).

\subsection{Frobenius in quaternionic coordinates}

Let \(\Pi=\pi I_{2}\in \rmop{M}_{2}(\calO)=\rmop{End}_{\bfk}(E^{2})\) and \(F=F_{A^{\sigma}}:A^{\sigma} \to A\).
We use the coordinates \(\iota:A \to E^{2}\) and \(\iota^{\sigma}:A^{\sigma} \to E^{2}\).
Naturality gives the commutative diagrams
\[
\begin{tikzcd}
A \dar[swap]{\iota}\rar{F_{A}} & A^{\sigma} \dar{\iota^{\sigma}} \rar{F} & A \dar{\iota}\\
E^{2} \rar[swap]{\Pi} & E^{2} \rar[swap]{\Pi} & E^{2}
\end{tikzcd}\qquad
\iota^{\sigma} F_{A} = \Pi \iota,\quad \iota F_{A^{\sigma}} = \Pi \iota^{\sigma}.
\]
If \(\phi:A \to A^{\sigma}\), then we consider \(\iota^{\sigma}\circ \phi \circ \iota^{-1}\in \rmop{End}(E^{2})\).
Define \(\gamma=\iota^{\sigma} \phi \iota^{-1}\in \rmop{End}(E^{2})\).
Then we have a diagram
\[
\begin{tikzcd}
A \dar[swap]{\iota}\rar{\phi} & A^{\sigma} \dar{\iota^{\sigma}} \rar{F} & A \dar{\iota}\\
E^{2} \rar[swap]{\gamma} & E^{2} \rar[swap]{\Pi} & E^{2}
\end{tikzcd}
\]
The outer rectangle therefore commutes:
\(
\iota(F\phi) = \Pi \iota^{\sigma} \phi = \Pi \gamma \iota.
\)
Thus, the matrix of \(F\phi\in \rmop{End}_{\bfk}(A)\) is
\[
\iota(F\phi)\iota^{-1} = \Pi \gamma.
\]
This means that we have a diagram
\[
\begin{tikzcd}[row sep=3em,column sep=6em]
\rmop{Hom}_{\bfk}(A,A^{\sigma}) \dar[swap]{\phi \mapsto \iota^{\sigma} \phi \iota^{-1}}\rar{\phi \mapsto F\circ \phi} & \rmop{End}_{\bfk}(A) \dar{v \mapsto \iota v \iota^{-1}} \\
\rmop{M}_{2}(\calO) \rar[swap]{\gamma \mapsto \Pi \gamma} & \rmop{M}_{2}(\calO).
\end{tikzcd}
\]
The vertical maps are isomorphisms, and the lower map is injective with image
\[
\Pi\, \rmop{M}_{2}(\calO)=\rmop{M}_{2}(\pi \calO) = \rmop{M}_{2}(\frakP).
\]
Thus, every \(x\in \rmop{M}_{2}(\frakP)\) determines an integral matrix \(\gamma = \Pi^{-1}x\in \rmop{M}_{2}(\calO)\), and hence a homomorphism \((\iota^{\sigma})^{-1} \gamma \iota:A \to A^{\sigma}\).

Applying \(\sigma\) to our \(\iota\)-coordinates diagram, we have
\[
\begin{tikzcd}[column sep=4em]
A \dar[swap]{\iota}\arrow{r}{\lambda} & A^{\vee} &
A^{\sigma} \dar[swap]{\iota^{\sigma}}\arrow{r}{\lambda^{\sigma}} & (A^{\sigma})^{\vee} \\
E^{2} \arrow{r}{\lambda_{0}h} & (E^{2})^{\vee} \uar[swap]{\iota^{\vee}}&
(E^{2}) \arrow{r}{\lambda_{0}h^{\sigma}} & (E^{2})^{\vee} \uar[swap]{(\iota^{\sigma})^{\vee}}
\end{tikzcd}
\]
Here, we use the fact that \((E^{2})^{\sigma}\cong E^{2}\) and \(\lambda_{0}^{\sigma}=\lambda_{0}\) in our chosen \(\bbF_{p}\)-model.

Thus, \(\lambda^{\sigma}\) is transported to \(\lambda_{0}h^{\sigma}\).
Naturality of relative Frobenius, applied to \(h:E^{2}\to E^{2}\) gives
\[
\begin{tikzcd}
E^{2} \dar[swap]{\Pi} \rar{h} & E^{2}\dar{\Pi}\\
E^{2} \rar[swap]{h^{\sigma}} & E^{2}
\end{tikzcd}\qquad
h^{\sigma} = \Pi h \Pi^{-1}.
\]

\subsection{Multipliers}

Finally, recall that an isogeny \(\phi:(Y,\lambda_{Y}) \to (Z,\lambda_{Z})\) has \emph{polarization multiplier} \(N\in \bbZ_{>0}\) if
\[
\phi^{\vee} \lambda_{Z} \phi = N \lambda_{Y},
\quad\text{or, equivalently},\quad
\phi^{\dagger}\phi=[N]_{Y}.
\]
(Since \(\phi^{\dagger}=\lambda_{Y}^{-1}\phi^{\vee} \lambda_{Z}\).)
The polarization in the middle must be the \emph{target polarization}, so that the composition
\(
X \xrightarrow{\phi} Y \xrightarrow{\lambda_{Y}} Y^{\vee} \xrightarrow{\phi^{\vee}} X^{\vee}
\)
is defined.
Here, our homomorphism is \(\phi:(A,\lambda) \to (A^{\sigma},\lambda^{\sigma})\) with source polarization \(\lambda\) and target \(\lambda^{\sigma}\).
The multiplier condition becomes
\[
\begin{tikzcd}
A \rar{\phi}\dar[swap]{N\lambda} & A^{\sigma} \dar{\lambda^{\sigma}}\\
A^{\vee} & (A^{\sigma})^{\vee}\lar{\phi^{\vee}}
\end{tikzcd}
\qquad
\phi^{\vee} \lambda^{\sigma} \phi = N\, \lambda.
\]
We now transport coordinates via \(\iota:A \to E^{2}\) and \(\iota^{\sigma}:A^{\sigma} \to E^{2}\), i.e., \(\psi\mapsto (\iota^{-1})^{\vee}\psi\iota^{-1}\), \(\lambda\) is transported to \(\lambda_{0}h\).
Let
\[
\gamma=\iota^{\sigma}\phi \iota^{-1}.
\]
Then, transport gives
\begin{align*}
(\iota^{-1})^{\vee} \phi^{\vee} \lambda^{\sigma} \phi \iota^{-1}
&=\gamma^{\vee}(\lambda_{0}h^{\sigma})\gamma
\end{align*}
by the transport of coordinates for \(\lambda^{\sigma}\).
Thus, the multiplier condition, in these coordinates, is
\[
\begin{tikzcd}
E^{2} \dar[swap]{N\,\lambda_{0}h}\rar{\gamma} & E^{2} \dar{\lambda_{0}h^{\sigma}}\\
(E^{2})^{\vee} & (E^{2})^{\vee} \lar{\gamma^{\vee}}
\end{tikzcd}
\qquad
\gamma^{\vee} \lambda_{0} h^{\sigma} \gamma = N \lambda_{0}h.
\]
Now, we use the product Rosati involution.
Since \(\gamma\in \rmop{M}_{2}(B)\), we know that
\[
\gamma^{*}=\lambda_{0}^{-1} \gamma^{\vee} \lambda_{0}, \quad\text{that is},\quad
\gamma^{\vee}\lambda_{0}=\lambda_{0}\gamma^{*}.
\]
Therefore, the multiplier equation becomes
\(
\lambda_{0}\gamma^{*}h^{\sigma}\gamma=N\lambda_{0}h,
\)
and since \(\lambda_{0}\) is an isomorphism, we can cancel it:
\[
\gamma^{*}h^{\sigma}\gamma=Nh.
\]

Since \(x\) is the matrix of \(F\phi:A\to A\), we can rewrite the multiplier equation.
Recall that
\[
\Pi=\pi I_{2},\quad \bar{\pi}=-\pi,\quad \pi^{2}=-p.
\]
Thus, \(\Pi^{*}=\bar{\pi}I_{2}=-\Pi\), and \(\Pi^{2}=-pI_{2}\), so \(\Pi^{-1}=-\Pi/p\).
Therefore,
\[
\Pi^{*}h\Pi=-\Pi h \Pi = - \Pi h (-p\Pi^{-1})=p\, (\Pi h \Pi^{-1})=p h^{\sigma},
\]
since \(h^{\sigma}=\Pi h \Pi^{-1}\).
Now, the multiplier condition \(\gamma^{*}h^{\sigma} \gamma = Nh\) holds if and only if
\[
p\gamma^{*}h^{\sigma}\gamma=pNh,\quad\text{equivalently},\quad
\gamma^{*}(ph^{\sigma})\gamma=pNh,
\]
if and only if
\[
\gamma^{*}(\Pi^{*}h\Pi)\gamma = pNh,
\]
and since \(x=\Pi\gamma\), this holds if and only if
\[
x^{*}hx=pNh,
\]
and since \(x^{\dagger_{h}}=h^{-1}x^{*}h\), this holds if and only if
\[
x^{\dagger_{h}}x=pNI_{2}.
\]
This will allow us to establish the multiplier condition directly in matrix \(\iota\)-coordinates.

\section{The rank five lattice}

\subsection{Subset of Rosati fixed elements}

We consider
\[
\rmop{Herm}_{2}(B) = \left\{
\begin{pmatrix}
a & b \\ \bar{b} & c
\end{pmatrix} : a,c\in \bbQ,\, b\in B
\right\}= \{H\in \rmop{M}_{2}(B) : H^{*}=H\}.
\]
We define the \emph{Hermitian determinant} (also known as the \emph{Moore determinant} in this setting)
\[
{\det}_{\rmH}:\Herm_{2}(B) \to \bbQ,\quad
\begin{pmatrix}
a & b \\ \bar{b} & c
\end{pmatrix}\mapsto ac-\rmop{nrd}(b),
\]
and let \(f=-{\det}_{\rmH}\).

Let \(h\in \rmop{M}_{2}(\calO)\), with \({\det}_{\rmH}(h)=1\), a positive definite Hermitian matrix.
We now consider the following subspace of \(\rmop{M}_{2}(B)\),
\begin{align*}
S_{h}
& = \{ x\in \rmop{M}_{2}(B) \mid x^{\dagger_{h}} = x,\, \rmop{Trd}(x) = 0\}\\
& = \{ x\in \rmop{M}_{2}(B) \mid x^{*}h = hx,\, \rmop{Trd}(x) = 0\}.
\end{align*}
This is a \(\bbQ\)-vector subspace of \(\rmop{M}_{2}(B)\).

\begin{proposition}
Let \(S_{h}\) be as above.
Then, \(\dim_{\bbQ}(S_{h})=5\).
The quadratic form
\[
q_{0}:S_{h} \to \bbQ,\quad x\mapsto \tfrac{1}{4} \rmop{Trd}(x^{2})
\]
is positive definite.
For every \(x\in S_{h}\), we have
\begin{enumerate}[auto]
\item \(x^{2}=q_{0}(x)I_{2}\),
\item \(hx\in \Herm_{2}(B)\),
\item \(q_{0}(x)=f(hx)\).
\end{enumerate}
\end{proposition}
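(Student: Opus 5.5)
Set $y=hx$ for $x\in \rmop{M}_{2}(B)$. The plan is to reduce everything to this matrix. The condition $x^{*}h=hx$ says $(hx)^{*}=x^{*}h^{*}=x^{*}h=hx$, using $h^{*}=h$. So $x\mapsto hx$ identifies $\{x: x^{\dagger_h}=x\}$ with $\Herm_{2}(B)$, which has dimension $1+4+1=6$ over $\bbQ$. This already gives item (2).

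\emph{Dimension.} Write $y=\begin{pmatrix} a & b\\ \bar b & c\end{pmatrix}$ and $x=h^{-1}y$ with $h^{-1}=\begin{pmatrix} t & -r\\ -\bar r & s\end{pmatrix}$. Then the diagonal entries of $x$ are $ta-rb$ and $-\bar r b+sc$, so
\[
\rmop{Trd}(x)=2ta+2sc-\rmop{trd}(rb)-\rmop{trd}(\bar r\bar b)=2\bigl(ta+sc-\rmop{trd}(rb)\bigr),
\]
using $\rmop{trd}(\bar r\bar b)=\rmop{trd}(\overline{br})=\rmop{trd}(rb)$. This is a nonzero linear functional on $\Herm_2(B)$, since its coefficient on $a$ is $t\neq0$. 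Hence $\dim S_h=5$.

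\emph{Item (1).} The key observation is a Cayley--Hamilton identity for $2\times2$ Hermitian quaternionic matrices: for $y\in\Herm_2(B)$ and $\tilde y=\begin{pmatrix} c & -b\\ -\bar b & a\end{pmatrix}$, one has $y\tilde y=\tilde y y={\det}_{\rmH}(y)I_2$. Note that $\tilde h=h^{-1}$. Then $x^2=h^{-1}yh^{-1}y$, and I will check directly that
\[
y h^{-1} y = -{\det}_{\rmH}(y)\, h \quad\text{when } \tfrac12\rmop{Trd}(h^{-1}y)=0 .
\]
The cleanest route is to polarize the adjugate: $\tilde{(\cdot)}$ is linear, and $y\tilde z+z\tilde y=\langle y,z\rangle I_2$ for Hermitian $y,z$, where $\langle y,z\rangle=az'+cz''-\rmop{trd}(b\bar b')$-type is the polar form of ${\det}_{\rmH}$. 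This holds by expanding $(y+z)(\tilde y+\tilde z)$.

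Now take $z=h$, so $\tilde z=h^{-1}$. One checks $\langle y,h\rangle=\rmop{Trd}(h^{-1}y)/2\cdot$(const), which vanishes on $S_h$. Hence $yh^{-1}=-h\tilde y$, and therefore $yh^{-1}y=-h\tilde y y=-{\det}_{\rmH}(y)h$. This gives $x^2=h^{-1}yh^{-1}y=f(y)I_2$.

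The main obstacle is the bookkeeping of the polar identity with noncommuting entries. I will verify the off-diagonal entries by hand, using that $a,c,s,t$ are central and $b\bar b'+b'\bar b\in\bbQ$.

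\emph{Item (3) and positivity.} From $x^2=f(hx)I_2$, taking $\rmop{Trd}$ (the sum of reduced traces of the diagonal entries) gives $\rmop{Trd}(x^2)=4f(hx)$, so $q_0(x)=f(hx)$, which proves (3) and re-proves (1). For positivity, note that $x=x^{\dagger_h}$, so $q_0(x)=\tfrac14\rmop{Trd}(x^{\dagger_h}x)=\tfrac14\rmop{Trd}(h^{-1}x^*hx)$. After tensoring with $\mathbb{R}$, write $h=g^*g$; this is possible since $h$ is positive definite. Then $h^{-1}x^*hx$ is conjugate to $w^*w$ with $w=gxg^{-1}$, so $\rmop{Trd}(x^{\dagger_h}x)=\rmop{Trd}(w^*w)=\sum_{ij}2\rmop{nrd}(w_{ij})>0$ for $x\neq0$. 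This uses the positivity of $\rmop{nrd}$ on $B\otimes\mathbb{R}$, the Hamilton quaternions.
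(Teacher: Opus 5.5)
Your argument is correct, and for items (1), (3) and positivity it follows a genuinely different route from the paper.

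\textbf{The paper's route.} The paper conjugates by the rational matrix $C=\begin{psmallmatrix} s & r\\ 0 & 1\end{psmallmatrix}$, which satisfies $C^{*}C=sh$. This carries $S_h$ onto the traceless Hermitian matrices $\begin{psmallmatrix} a & b\\ \bar b & -a\end{psmallmatrix}$, whose squares are $(a^{2}+\nrd(b))I_{2}$. So the dimension, item (1) and positive definiteness all come from one change of coordinates. Item (3) is then obtained only up to sign:
\begin{itemize}
\item $\Nrd(x)=q_0(x)^2$ follows from (1);
\item $\Nrd(hx)={\det}_{\rmH}(hx)^2$ comes from the Moore-determinant identity, which the paper uses without proof;
\item hence $q_0=\pm{\det}_{\rmH}(h\,\cdot)$ as polynomials, and the sign is fixed by evaluating at $x_0=h^{-1}\rmop{diag}(s,-t)$.
\end{itemize}

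\textbf{Your route.} Your polarized adjugate identity $y\tilde z+z\tilde y=\langle y,z\rangle I_2$, specialized to $z=h$ with $\tilde h=h^{-1}$, gives the exact identity $x^2=f(hx)I_2$ directly. So (1) and (3) come out together by pure algebra, with no reduced norms, no appeal to $\Nrd={\det}_{\rmH}^2$, and no sign-fixing. It is essentially the paper's later proposition $\Trd(h^{-1}X)=-4b_f(h,X)$, moved forward. The cost is that positivity must be proved separately. Your Cholesky factor is really the paper's $C$ again: $g=s^{-1/2}C$ works, and since scalars cancel in $w=gxg^{-1}$ you can take $g=C$ and never leave $\bbQ$.

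\textbf{A computational slip.} You have $(h^{-1}y)_{11}=ta-r\bar b$, not $ta-rb$. Also, the second trace term is $\trd(\bar r b)$, not $\trd(\bar r\bar b)$, by your own $(2,2)$ entry. The correct formula is $\Trd(h^{-1}y)=2\bigl(ta+sc-\trd(r\bar b)\bigr)$.

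This does not affect the dimension count, which only uses the coefficient $2t\neq 0$ of $a$. With the corrected formula, the check you defer becomes exact:
\[
\langle y,h\rangle={\det}_{\rmH}(y+h)-{\det}_{\rmH}(y)-{\det}_{\rmH}(h)=ta+sc-\trd(r\bar b)=\tfrac12\Trd(h^{-1}y),
\]
so your unspecified constant is $1$, and $\langle y,h\rangle$ indeed vanishes on $S_h$.
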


\begin{proof}
Property (2) is immediate.
Since \(x^{*}h=hx\) for \(x\in S_{h}\), we have \((hx)^{*}=x^{*}h^{*}=x^{*}h=hx\).

Let
\[
C=\begin{pmatrix}
s & r\\ 0 & 1
\end{pmatrix}\in \rmop{GL}_{2}(B).
\]
Since \(st-\nrd(r)=1\), we compute
\[
C^{*}C=\begin{pmatrix}
s^{2} & sr\\ s\bar{r} & \nrd(r)+1
\end{pmatrix}=sh.
\]
Let \(Y=CxC^{-1}\).
Then, \(s^{-1}C^{*}=hC^{-1}\), so
\begin{align*}
x^{*}h = hx &\quad \Leftrightarrow\quad
(C^{-1}YC)^{*}h = hC^{-1}YC \\
&\quad \Leftrightarrow\quad
C^{*}Y^{*} = C^{*}Y 
\quad \Leftrightarrow\quad Y^{*}=Y.
\end{align*}
The reduced trace is invariant under conjugation so \(\Trd(Y)=\Trd(x)\).
So conjugation by \(C\) identifies \(S_{h}\) over \(\bbQ\) with
\[
\left\{
\begin{pmatrix}
a & b \\ \bar{b} & -a
\end{pmatrix}: a\in \bbQ,\, b\in B
\right\}.
\]
We conclude that \(\dim_{\bbQ}(S_{h})=5\).
If \(Y=\begin{psmallmatrix} a & b \\ \bar{b} & -a \end{psmallmatrix}\), then we have
\(
Y^{2}=(a^{2}+\nrd(b))I_{2}.
\)
Thus, \(\Trd(Y^{2})=4(a^{2}+\nrd(b))\), and we can write \(Y^{2}=\frac{1}{4}\Trd(Y^{2})\, I_{2}\).
Since the reduced trace is invariant under conjugation, we can also write
\[
x^{2}=C^{-1}Y^{2}C
=C^{-1}\bigl(\tfrac{1}{4}\Trd(Y^{2})I_{2}\bigr)C
=C^{-1}\bigl(\tfrac{1}{4}\Trd(x^{2})I_{2}\bigr)C
=q_{0}(x)\, I_{2}.
\]
This proves (1).
Since \(q_{0}(x)=a^{2}+\nrd(b)\), this shows that \(q_{0}\) is positive definite.

Since \(x^{2}=q_{0}(x)I_{2}\), taking reduced norms, we get \(\Nrd(x)^{2}=q_{0}(x)^{4}\), so \(\Nrd(x)=\pm q_{0}(x)^{2}\).
Since \(hx\) is hermitian and \(\Nrd(h)=1\), we have \(\Nrd(x)=\Nrd(hx)={\det}_{\rmH}(hx)^{2}\geq 0\), so \(\Nrd(x)=q_{0}(x)^{2}\).
We have \(q_{0}(x)^{2}={\det}_{\rmH}(hx)^{2}\), since we must have an identity of polynomial functions, the equation \(q_{0}(x)=\pm {\det}_{\rmH}(hx)\) must hold identically with a fixed sign for all \(x\).
Put \(x_{0}=h^{-1}\rmop{diag}(s,-t)\).
Then, \(hx_{0}\) is Hermitian and \(\Trd(x_{0})=0\), and \({\det}_{\rmH}(hx_{0})=-st<0\) and we conclude the sign.

\end{proof}

Since \(\frakP\subset B\) is a lattice, \(\rmop{M}_{2}(\frakP)\) is a full \(\bbZ\)-lattice in \(\rmop{M}_{2}(B)\).
Define
\[
\scrL_{h}=\rmop{M}_{2}(\frakP)\cap S_{h}.
\]
Since \(S_{h}\) is a \(\bbQ\)-subspace, \(\scrL_{h}\) is a full \(\bbZ\)-lattice in \(S_{h}\).

If \(x\in \scrL_{h}\), every entry \(x_{ij}\in \frakP\), hence every entry of \(x^{2}\) lies in \(\frakP^{2}=p\calO\).
Thus, \(x^{2}=q_{0}(x)I_{2}\in \rmop{M}_{2}(\frakP^{2})=\rmop{M}_{2}(p\calO)\), so that \(q_{0}(x)\in p\calO\).
But \(q_{0}(x)\in \bbQ\) since it is defined by the reduced trace.
Thus, \(q_{0}(x)\in (p\calO)\cap \bbQ=p\bbZ\), that is, \(q_{0}(x)/p\in \bbZ\).

This allows us to define the \emph{integral quadratic form}
\[
q_{h}:\scrL_{h} \to \bbZ,\quad x\mapsto \frac{q_{0}(x)}{p},
\]
which is positive definite since \(q_{h}=\frac{1}{p}q_{0}|_{\scrL_{h}}\).

\subsection{Computation of the determinant}

If \(f:V\to \bbQ\) is a quadratic form, define the (half-)\emph{polarization} of \(f\),
\[
b_{f}(X,Y)= \frac{f(X+Y)-f(X)-f(Y)}{2},\quad X,Y\in V.
\]

\begin{proposition}
For every \(X\in \Herm_{2}(B)\), we have
\[
\Trd(h^{-1}X)=-4\, b_{f}(h,X).
\]
\end{proposition}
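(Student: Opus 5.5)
The identity is bilinear in nature and both sides are explicit polynomial expressions in the entries of \(h\) and \(X\), so I will prove it by a direct coordinate computation. Write
\[
h=\begin{pmatrix} s & r\\ \bar r & t\end{pmatrix},\quad
h^{-1}=\begin{pmatrix} t & -r\\ -\bar r & s\end{pmatrix},\quad
X=\begin{pmatrix} a & b\\ \bar b & c\end{pmatrix},
\]
with \(s,t,a,c\in\bbQ\) and \(r,b\in B\). Here \(st-\nrd(r)=1\) and the formula for \(h^{-1}\) are as recalled in the IKO correspondence above.

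The first ingredient is the convention for \(\Trd\) on \(\rmop{M}_{2}(B)\). The reduced trace of this degree-four central simple algebra is the sum of the quaternion reduced traces of the diagonal entries, so \(\Trd(z)=\trd(z_{11})+\trd(z_{22})\). With this, I compute the diagonal of \(h^{-1}X\):
\[
(h^{-1}X)_{11}=ta-r\bar b,\qquad (h^{-1}X)_{22}=-\bar r b+sc.
\]
Since \(\trd\) is \(\bbQ\)-linear, \(\trd(q)=2q\) for \(q\in\bbQ\), and \(\trd(\bar r b)=\trd(\overline{\bar b r})=\trd(\bar b r)=\trd(r\bar b)\), we get
\[
\Trd(h^{-1}X)=2(ta+sc)-2\trd(r\bar b).
\]

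The second ingredient is the polarization of \(f=-{\det}_{\rmH}\). Expanding \({\det}_{\rmH}(h+X)=(s+a)(t+c)-\nrd(r+b)\) and using \(\nrd(r+b)=\nrd(r)+\nrd(b)+\trd(r\bar b)\), the cross terms give
\[
{\det}_{\rmH}(h+X)-{\det}_{\rmH}(h)-{\det}_{\rmH}(X)=sc+ta-\trd(r\bar b).
\]
Hence \(b_{f}(h,X)=-\tfrac12\bigl(sc+ta-\trd(r\bar b)\bigr)\). Multiplying by \(-4\) gives exactly the expression obtained for \(\Trd(h^{-1}X)\), which proves the identity.

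There is no real obstacle here. The only points needing care are the normalization of \(\Trd\) on \(\rmop{M}_{2}(B)\), namely that it is the sum of \(\trd\) of the diagonal entries and not the sum of the entries themselves, and the symmetry \(\trd(\bar r b)=\trd(r\bar b)\) used to combine the two off-diagonal contributions.
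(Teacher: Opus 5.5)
Your proof is correct and is essentially the paper's own argument: the same explicit computation of the diagonal of \(h^{-1}X\) using \(h^{-1}=\begin{psmallmatrix} t & -r\\ -\bar r & s\end{psmallmatrix}\), followed by expanding \({\det}_{\rmH}(h+X)\) with \(\nrd(r+b)=\nrd(r)+\nrd(b)+\trd(r\bar b)\) and comparing cross terms. Your explicit justification of \(\trd(\bar r b)=\trd(r\bar b)\) and your remark on the normalization of \(\Trd\) on \(\rmop{M}_{2}(B)\) spell out two facts that the paper uses, the first stated in passing and the second only implicitly.
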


\begin{proof}
Since \({\det}_{\rmH}(h)=1\), we have
\[
h^{-1}=\begin{pmatrix}
t & -r \\ - \bar{r} & s
\end{pmatrix}.
\]
Let \(X=\begin{psmallmatrix} a & b \\ \bar{b} & d\end{psmallmatrix}\).
Compute
\[
h^{-1}X=\begin{pmatrix}
ta-r\bar{b} & tb-rd\\
-\bar{r} a +s \bar{b} & -\bar{r} b + sd
\end{pmatrix}.
\]
Then,
\begin{align*}
\Trd(h^{-1}X)
& = \trd(ta-r\bar{b})+\trd(-\bar{r}b+sd)\\
& = \trd(ta)+\trd(sd) - [\trd(r\bar{b})+\trd(\bar{r}b)]\\
& = 2t\,a+2s\,d -2 \trd(r\bar{b}),
\end{align*}
since \(\trd(\bar{u})=\trd(u)\) and \(\trd(uv)=\trd(vu)\).
Thus,
\[
\Trd(h^{-1}X)=2\bigl(ta+sd-\trd(r\bar{b})\bigr).
\]
Write
\[
h+X=\begin{pmatrix}
s+a & r+b \\ \bar{r}+\bar{b} & t+d
\end{pmatrix}.
\]
Then,
\[
{\det}_{\rmH}(h+X)=(s+a)(t+d)-\nrd(r+b)
=st+sd+at+ad-\nrd(r+b).
\]
Since \(\nrd(r+b)=\nrd(r)+\nrd(b)+\trd(r\bar{b})\), we see that indeed,
\begin{align*}
{\det}_{\rmH}(h+X)
& = (st-\nrd(r))+(ad-\nrd(b))+sd+at-\trd(r\bar{b})\\
& = {\det}_{\rmH}(h)+{\det}_{\rmH}(X)+sd+at-\trd(r\bar{b}).
\end{align*}
Thus,
\begin{align*}
2\, b_{f}(h,X) & = f(h+X)-f(h)-f(X)\\
 & = -{\det}_{\rmH}(h+X)+{\det}_{\rmH}(h)+{\det}_{\rmH}(X)\\
 & = -\bigl(sd+at-\trd(r\bar{b})\bigr).
\end{align*}
Comparing, we see that
\[
\Trd(h^{-1}X)=-4\, b_{f}(h,X).\qedhere
\]
\end{proof}

Let \(\Lambda=\bbZ \oplus \frakP \oplus \bbZ\) and let \(Q:\Lambda \to \bbZ\) be defined by
\[
Q(a,b,c)= \frac{1}{p} \nrd(b) - pac,\quad (a,b,c)\in \Lambda.
\]
Define the homomorphism of free abelian groups
\[
\eta_{h}:\Lambda \to \bbZ,\quad
\eta_{h}(a,b,c)=ta+sc-\frac{1}{p}\trd(\bar{r}b).
\]

Now, for \((a,b,c)\in \bbZ\oplus \frakP\oplus \bbZ\), define
\[
\calH(a,b,c)=\begin{pmatrix}
pa & b \\ \bar{b} & pc
\end{pmatrix},\quad (a,b,c)\in \Lambda.
\]
Then, the above calculation gives (replacing \(a\) by \(pa\) and \(d\) by \(pc\))
\[
\Trd(h^{-1}\calH(a,b,c))=2(pta+psc-\trd(r\bar{b}))
=2p \left(ta+sc-\frac{\trd(r\bar{b})}{p}\right).
\]
Thus,
\[
\Trd(h^{-1}\calH(v))=2p\, \eta_{h}(v),\quad v\in \Lambda.
\]
This map \(\calH\) is an isomorphism:
\[
\calH:\bbZ\oplus \frakP\oplus \bbZ \to \rmop{M}_{2}(\frakP)\cap \Herm_{2}(B),\quad
\calH(a,b,c)=\begin{pmatrix}
pa & b \\ \bar{b} & pc
\end{pmatrix}.
\]
Its inverse is
\[
\calH^{-1}\begin{pmatrix}
A & b \\ \bar{b} & D
\end{pmatrix}=\left(\frac{A}{p},b,\frac{D}{p}\right).
\]

\begin{proposition}
With the above notation, let \(K_{h}=\ker \eta_{h}\subset\Lambda\).
Then, multiplication by \(h\) induces an isometry of integral quadratic lattices
\[
\theta_{h}:(\scrL_{h},q_{h})\stosim (K_{h},Q)
\]
given as follows:
for \(x\in \scrL_{h}\) and \((a,b,c)\in K_{h}\), we have
\[
\theta_{h}(x)=\left(\frac{1}{p}(hx)_{11}, (hx)_{12}, \frac{1}{p}(hx)_{22}\right),\quad
\theta_{h}^{-1}(a,b,c)=h^{-1} \begin{pmatrix}
pa & b \\ \bar{b} & pc
\end{pmatrix}.
\]
\end{proposition}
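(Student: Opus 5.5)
Since \(h\in\rmop{M}_{2}(\calO)\) with \(h^{-1}\in\rmop{M}_{2}(\calO)\), left multiplication by \(h\) is a bijection of \(\rmop{M}_{2}(\calO)\). I will first check that it also preserves \(\rmop{M}_{2}(\frakP)\). The point is that \(\frakP\) is a two-sided ideal of \(\calO\), so \(\calO\frakP\subset\frakP\). Hence \(h\,\rmop{M}_{2}(\frakP)\subset\rmop{M}_{2}(\frakP)\), and likewise for \(h^{-1}\), which gives equality. Consequently \(x\mapsto hx\) is a \(\bbZ\)-linear bijection of \(\rmop{M}_{2}(\frakP)\). It will remain to identify the image of \(S_{h}\), and then to compare the quadratic forms.

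For the image: by (2) of the first Proposition, \(x\in S_{h}\) gives \(hx\in\Herm_{2}(B)\). Conversely, if \(X\) is Hermitian then \(x=h^{-1}X\) satisfies \(x^{*}h=X^{*}h^{-1}h=X=hx\), using \((h^{-1})^{*}=h^{-1}\). So \(h\) maps \(\{x: x^{*}h=hx\}\) bijectively onto \(\Herm_{2}(B)\). Intersecting with \(\rmop{M}_{2}(\frakP)\) and composing with \(\calH^{-1}\), we see that \(x\mapsto\calH^{-1}(hx)\) identifies \(\{x\in\rmop{M}_{2}(\frakP): x^{*}h=hx\}\) with \(\Lambda\). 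The formula for this composite is exactly \(\theta_{h}\). Here I will use that \(\calH\) is an isomorphism onto \(\rmop{M}_{2}(\frakP)\cap\Herm_{2}(B)\); the diagonal entries of such a matrix lie in \(\frakP\cap\bbQ=p\bbZ\), which is why dividing by \(p\) is legitimate. The trace condition transforms via the identity \(\Trd(h^{-1}\calH(v))=2p\,\eta_{h}(v)\) established above. Since \(p\neq0\), \(\Trd(x)=0\) holds if and only if \(\eta_{h}(\theta_{h}(x))=0\). Therefore \(\theta_{h}(\scrL_{h})=K_{h}\), and the stated formula for \(\theta_{h}^{-1}\) follows.

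For the isometry I use (3) of the first Proposition: \(q_{0}(x)=f(hx)=-{\det}_{\rmH}(hx)\). With \(hx=\calH(a,b,c)\) this gives
\[
q_{0}(x)=-(p^{2}ac-\nrd(b))=\nrd(b)-p^{2}ac .
\]
Dividing by \(p\) yields \(q_{h}(x)=\tfrac1p\nrd(b)-pac=Q(a,b,c)\). I also need \(Q\) to be integral on \(\Lambda\), which holds because \(\nrd(\frakP)\subset p\bbZ\), as shown earlier.

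The main care point is the bijectivity of \(h\) on \(\rmop{M}_{2}(\frakP)\). If this fails, the map would only be an injection with finite cokernel rather than an isomorphism of lattices. This is exactly where two-sidedness of \(\frakP\) and unimodularity of \(h\) (meaning \(h^{-1}\) is integral) are used. Everything else is formal bookkeeping with results already established.
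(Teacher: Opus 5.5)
Your proposal is correct and follows essentially the same route as the paper. Both arguments show that left multiplication by \(h\) is an automorphism of \(\rmop{M}_{2}(\frakP)\), using that \(\frakP\) is two-sided and \(h^{-1}\in\rmop{M}_{2}(\calO)\). Both then identify \(\rmop{M}_{2}(\frakP)\cap\Herm_{2}(B)\) with \(\Lambda\) via \(\calH\), cut out \(K_{h}\) with the identity \(\Trd(h^{-1}\calH(v))=2p\,\eta_{h}(v)\), and get the isometry from \(q_{0}(x)=f(hx)\); your explicit check of the converse via \((h^{-1})^{*}=h^{-1}\) is just a slightly more detailed version of the paper's \((hx)^{*}=hx\) step.
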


Note that, with these formulas, it is easy to see, that if \(m_{h}:x\mapsto hx\) denotes multiplication by \(h\), then
\[
\theta_{h}=\calH^{-1}\circ m_{h}:\scrL_{h} \to K_{h},\quad
\theta_{h}^{-1}=m_{h^{-1}} \circ \calH :K_{h} \to \scrL_{h}.
\]
Thus,
\[
\calH(\theta_{h}(x))=hx,\quad \theta_{h}(x)=\calH^{-1}(hx),
\]
and
\[
\theta_{h}^{-1}(a,b,c)=h^{-1}\calH(a,b,c).
\]

\begin{proof}
Since \(h\in \rmop{M}_{2}(\calO)\) is hermitian positive definite with \({\det}_{\rmH}(h)=1\), we have \(h^{-1}\in \rmop{M}_{2}(\calO)\).
Since \(\frakP\) is a two-sided ideal, if \(x\in \rmop{M}_{2}(\frakP)\), then \(hx\) and \(h^{-1}x\) have entries in \(\calO\frakP=\frakP\).
Thus, the map
\[
\rmop{M}_{2}(\frakP)\to \rmop{M}_{2}(\frakP),\quad x\mapsto hx,
\]
is an automorphism of \(\bbZ\)-lattices.

Since \(\frakP\cap\bbQ=p\bbZ\) and \(\frakP\) is fixed under quaternionic conjugation, the Hermitian matrices in \(\rmop{M}_{2}(\frakP)\cap \Herm_{2}(B)\) are precisely of the form
\[
\calH(a,b,c)=\begin{pmatrix}
pa & b \\ \bar{b} & pc
\end{pmatrix}\in \rmop{M}_{2}(\frakP),\quad (a,b,c)\in \Lambda.
\]
The form \(Q\) is integral because \(\nrd(\frakP)\subset p\bbZ\).
We compute
\[
f(\calH(a,b,c))=-{\det}_{\rmH}(\calH(a,b,c))=\nrd(b)-p^{2}ac=p\cdot Q(a,b,c).
\]

Now let \(x\in \scrL_{h}\).
Since \(hx\) is hermitian, there is a unique triple \((a,b,c)\in \Lambda\) such that \(hx=\calH(a,b,c)\).
The description of \(hx=\calH(a,b,c)\) gives immediately the formula for \(\theta_{h}(x)\).
Thus, \(x=h^{-1}\calH(a,b,c)\), and the above equality shows that
\[
\Trd(x)=0\quad\Leftrightarrow\quad
\Trd(h^{-1}\calH(a,b,c))=0\quad\Leftrightarrow\quad
\eta_{h}(a,b,c)=0.
\]
So \(\theta_{h}(\scrL_{h})\subset K_{h}=\ker\eta_{h}\).
Conversely, if \((a,b,c)\in K_{h}\), then \(x=h^{-1}\calH(a,b,c)\in \rmop{M}_{2}(\frakP)\) as explained previously, and since \(hx=\calH(a,b,c)\) is Hermitian by design, we have \((hx)^{*}=hx\).
It follows that \(x^{\dagger_{h}}=x\) and \(\Trd(x)=0\), so \(x\in \scrL_{h}\).

It remains to show that \(\theta_{h}\) is an isometry.
We showed earlier that
\[
Q(a,b,c)= \frac{f(\calH(a,b,c))}{p}.
\]
Thus, for \(v=(a,b,c)\), \(Q(v)=f(\calH(v))/p\).
So for \(x\in \scrL_{h}\), we have
\[
Q(\theta_{h}(x))=
\frac{f(\calH(\theta_{h}(x)))}{p}
=\frac{f(hx)}{p}=\frac{q_{0}(x)}{p}=q_{h}(x).
\]
Thus, \(\theta_{h}\) is an isometry.
\end{proof}

\section{Determinant calculation}

We now compute the determinant of \((K_{h},Q)\).

For a quadratic form \(q:V\to \bbQ\), recall that we define its half-polar form
\[
b_{q}:V\times V \to \bbQ,\quad
b_{q}(u,v)=\frac{q(u+v)-q(u)-q(v)}{2}.
\]
If \(L\subset V\) is a rank-\(n\) lattice with integral basis \((e_{i})_{1\leq i\leq n}\), define
\[
\det(L,q)=\det(b_{q}(e_{i},e_{j})).
\]

Consider the trace dual of \(\calO\) (as a \(\bbZ\)-lattice) (\cite{Voight2021}, 15.6.1):
\[
\calO^{\#}=\{z\in B \mid \trd(z\calO)\subset \bbZ\}=\{z\in B \mid \trd(\calO z)\subset \bbZ\}.
\]
This is the codifferent of \(\calO\).

\begin{proposition}
\(\calO^{\#}=\frakP^{-1}=p^{-1}\frakP\).
\end{proposition}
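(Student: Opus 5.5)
The plan is to prove the two inclusions \(\frakP^{-1}\subseteq\calO^{\#}\) and \(\calO^{\#}\subseteq\frakP^{-1}\) separately. The second inclusion will come from a discriminant (index) comparison rather than a direct computation.

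Before that, I will settle the equality \(\frakP^{-1}=p^{-1}\frakP\). Since \(\frakP=\pi\calO=\calO\pi\) and \(\pi^{-1}=-\pi/p\), we get
\[
\frakP^{-1}=\calO\pi^{-1}=\pi^{-1}\calO=p^{-1}\pi\calO=p^{-1}\frakP .
\]
This is a two-sided fractional ideal, and one checks directly that \(\frakP\frakP^{-1}=\calO\).

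For \(\frakP^{-1}\subseteq\calO^{\#}\), take \(z=p^{-1}u\) with \(u\in\frakP\), and let \(a\in\calO\). Then \(ua\in\frakP\), because \(\frakP\) is a two-sided ideal. By the computation in the section on the ramified ideal, \(\trd(ua)\in\frakP\cap\bbQ=p\bbZ\). Hence \(\trd(za)=p^{-1}\trd(ua)\in\bbZ\), so \(z\in\calO^{\#}\).

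For the reverse inclusion I will use the index \([\calO^{\#}:\calO]\). The main obstacle is computing this index correctly.
\begin{enumerate}[auto]
\item By the standard fact that \(\calO\) is maximal in \(B_{p,\infty}\), its reduced discriminant is \(p\). Equivalently, \(\det\bigl(\trd(e_ie_j)\bigr)=-p^{2}\) for a \(\bbZ\)-basis \((e_i)\) of \(\calO\).
\item The trace form \((x,y)\mapsto\trd(xy)\) is nondegenerate, so \(\calO^{\#}\) is the dual lattice of \(\calO\) for this form. Therefore \([\calO^{\#}:\calO]=|\det(\trd(e_ie_j))|=p^{2}\) (see \cite{Voight2021}, Chapter 15).
\item On the other side, multiplication by \(p^{-1}\) is an isomorphism, so \([p^{-1}\frakP:\calO]=[\frakP:p\calO]=p^{2}\), using the index computation \([\calO:\frakP]=[\frakP:p\calO]=p^{2}\) already done above.
\item The two lattices satisfy \(\frakP^{-1}\subseteq\calO^{\#}\) and both contain \(\calO\) with the same index \(p^{2}\). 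Hence they are equal.
\end{enumerate}

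If one wants to avoid quoting the discriminant, there is an alternative route. First, \(\calO^{\#}\) is a two-sided \(\calO\)-module containing \(\calO\). Its localizations at primes \(\ell\neq p\) are trivial, because \(\calO_\ell\cong\rmop{M}_{2}(\bbZ_\ell)\) and the trace form on \(\rmop{M}_{2}(\bbZ_\ell)\) is unimodular. At \(p\), \(\calO_p\) is the valuation ring of the division algebra, and its two-sided fractional ideals are the powers of \(\frakP_p\). Thus \(\calO^{\#}_p=\frakP_p^{-k}\) for some \(k\ge1\), and \(k\ge2\) is excluded because \(\trd(p^{-1}\calO)\not\subseteq\bbZ\): some element of \(\calO\) has trace \(1\), for instance when \(\calO\not\subseteq\bbZ+2\calO\). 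The index argument above is cleaner, so I will present that one.
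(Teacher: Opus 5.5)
Your proof is correct, but it takes a different route from the paper's.

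The paper works with the different \((\calO^{\#})^{-1}\). It cites Voight (16.8) to get that this is an integral two-sided \(\calO\)-ideal whose reduced norm equals the reduced discriminant \(p\). It then identifies it with \(\frakP\) as ``the unique two-sided prime ideal above \(p\)''.

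You prove the inclusion \(\frakP^{-1}\subseteq\calO^{\#}\) by hand, from \(\trd(\frakP)\subseteq\frakP\cap\bbQ=p\bbZ\). You then close the gap by an index count over \(\calO\): \([\calO^{\#}:\calO]=|\det(\trd(e_ie_j))|=p^{2}\), by the dual-lattice index formula for the trace form, which is integral on \(\calO\). On the other side, \([p^{-1}\frakP:\calO]=[\frakP:p\calO]=p^{2}\).

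What your version buys:
\begin{itemize}
\item It is more elementary and self-contained. It uses only facts already established in the section on the ramified ideal, plus \(|\det T|=p^{2}\), which the paper itself uses later in the Gram determinant computation.
\item It avoids the theory of the different.
\item It also avoids the classification of two-sided ideals of a maximal order. The paper's final step needs that an integral two-sided ideal of reduced norm \(p\) must be \(\frakP\), and it quotes this rather than proving it.
\end{itemize}
The paper's route is shorter on the page and more conceptual, since it presents the result as an instance of the general relation between the different and the discriminant.

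In your optional local argument, the detour through an element of trace \(1\) (and the condition \(\calO\not\subseteq\bbZ+2\calO\)) is unnecessary. Since \(p\) is odd, \(\trd(p^{-1}\cdot 1)=2/p\notin\bbZ_{p}\), which already rules out \(k\geq 2\).
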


\begin{proof}
Referring to Voight's book, the different of \(\calO\) is an integral two-sided \(\calO\)-ideal, and the inverse of \(\calO^{\#}\) (16.8.1, 16.8.2).
Then, \(\nrd(\calO^{\#-1})\) is equal to the reduced discriminant of \(\calO\) (16.8.4), which is equal to \(p\) (15.5.5, see p.~242).
so the different is the unique two-sided prime ideal \(\frakP\) above \(p\).
We conclude that \(\calO^{\#}=\frakP^{-1}\).
finally, since \(\frakP^{2}=p\calO\), we get
\[
\frakP(p^{-1}\frakP)=p^{-1}\frakP^{2}=\calO=(p^{-1}\frakP)\frakP.
\]
Thus, \(\frakP^{-1}=p^{-1}\frakP\).
\end{proof}

The reduced trace pairing
\[
B\times B \to \bbQ,\quad (x,y)\mapsto \trd(xy)=\trd(yx)
\]
is symmetric and nondegenerate, so gives rise to a \(\bbQ\)-linear isomorphism
\[
B\stosim B^{\vee}=\rmop{Hom}_{\bbQ}(B,\bbQ),\quad
x\mapsto \bigl(x\mapsto \trd(zx)\bigr).
\]
Now, \(\calO^{\#}\) is the inverse image of \(\calO^{\vee}=\rmop{Hom}_{\bbZ}(\calO,\bbZ)\) under this map, so we get an isomorphism of free abelian groups by restriction:
\[
\calO^{\#}\stosim \calO^{\vee}=\rmop{Hom}_{\bbZ}(\calO,\bbZ),\quad
x\mapsto \bigl(x\mapsto \trd(zx)\bigr).
\]


\begin{proposition}
The bilinear map
\[
\langle -,-\rangle:\calO\times \frakP \to \bbZ,\quad
(u,b)\mapsto \langle u,b\rangle =\frac{\trd(\bar{u}b)}{p}
\]
is perfect.
For every integer \(q\) and every \(u\in \calO\), we have
\[
\langle u,\frakP\rangle \subset q\bbZ
\quad\Leftrightarrow\quad
u\in q\calO.
\]
\end{proposition}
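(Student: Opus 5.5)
Everything reduces to the preceding proposition, \(\calO^{\#}=p^{-1}\frakP\), combined with the identification \(\calO^{\#}\stosim\calO^{\vee}\) given by the reduced trace pairing.

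\textbf{Well-definedness.} Take \(u\in\calO\) and \(b\in\frakP\). Since \(\bar{\calO}=\calO\), we have \(\bar u\in\calO\). Since \(p^{-1}b\in p^{-1}\frakP=\calO^{\#}\), it follows that
\[
\langle u,b\rangle=\trd\bigl(\bar u\,(p^{-1}b)\bigr)\in\bbZ .
\]
So the pairing is \(\bbZ\)-valued, and it is clearly bilinear.

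\textbf{Perfectness.} I would show that \(b\mapsto\langle -,b\rangle\) is an isomorphism \(\frakP\stosim\calO^{\vee}\). Factor this map as follows:
\begin{itemize}
\item first \(b\mapsto p^{-1}b\), an isomorphism \(\frakP\stosim p^{-1}\frakP=\calO^{\#}\);
\item then the isomorphism \(\calO^{\#}\stosim\calO^{\vee}\), \(z\mapsto\trd(z\,\cdot)\), recalled just above;
\item finally precomposition with the automorphism \(u\mapsto\bar u\) of \(\calO\).
\end{itemize}
Each step is an isomorphism, so the composite is too.

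A perfect pairing of free \(\bbZ\)-modules of rank \(4\) also gives the other side: \(u\mapsto\langle u,-\rangle\) is an isomorphism \(\calO\stosim\frakP^{\vee}\). If one wants this directly, one can instead use that \(\{\bar u:u\in\calO\}\) is the dual lattice of \(p^{-1}\frakP\), since \((\calO^{\#})^{\#}=\calO\).

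\textbf{The divisibility criterion.} Suppose \(u\in q\calO\), say \(u=qu'\) with \(u'\in\calO\). Then \(\langle u,b\rangle=q\langle u',b\rangle\in q\bbZ\) for every \(b\in\frakP\).

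Conversely, suppose \(\langle u,\frakP\rangle\subset q\bbZ\), and assume \(q\neq 0\) (for \(q=0\), perfectness gives \(u=0\) directly). Then \(\langle -\rangle\) applied with first argument \(u\) gives a functional \(\frakP\to\bbZ\) of the form \(q\,\ell\) with \(\ell\in\frakP^{\vee}\). By perfectness, \(\ell=\langle u',-\rangle\) for some \(u'\in\calO\). Hence \(\langle u-qu',-\rangle=0\), and nondegeneracy forces \(u=qu'\in q\calO\).

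\textbf{Main obstacle.} The only nontrivial input is the identification \(\calO^{\#}=p^{-1}\frakP\), which was established in the preceding proposition. The one point needing care is the conjugation in \(\trd(\bar u b)\). It is harmless because \(\calO\) is stable under conjugation and \(\trd(xy)=\trd(yx)\), so \(\trd(\calO z)\) and \(\trd(z\calO)\) agree, as noted in the definition of \(\calO^{\#}\).
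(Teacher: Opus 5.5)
Your proof is correct and is essentially the paper's argument. Both obtain the isomorphism $\frakP\stosim\calO^{\vee}$, $b\mapsto\langle -,b\rangle$, from $\calO^{\#}=p^{-1}\frakP$ and the trace-dual isomorphism $\calO^{\#}\stosim\calO^{\vee}$, then dualize to get $\calO\stosim\frakP^{\vee}$ and read off the divisibility criterion from injectivity; the only differences are that you insert the conjugation by precomposing on $\calO$ where the paper uses $b\mapsto\bar b/p$, and that you check well-definedness and the case $q=0$ explicitly where the paper leaves them implicit.
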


\begin{proof}
We can apply \cite{Voight2021}, 15.6.7 with \(I=\calO\) so that \(\trd(I)=\bbZ\).
This states that the map
\[
D:\calO^{\#} \to \rmop{Hom}_{\bbZ}(\calO,\bbZ),\quad
\beta \mapsto D(\beta)=(\alpha\mapsto \trd(\alpha\beta))
\]
is an isomorphism.
Conjugation preserves \(\frakP\), so \(j:\frakP\to \calO^{\#}=\frac{1}{p}\frakP\), \(b\mapsto \bar{b}/p\) is a \(\bbZ\)-linear isomorphism.
The composition \(T=D\circ j:\frakP\to \calO^{\#} \to \calO^{\vee}\) is
\[
T:\frakP\to \calO^{\vee},\quad
[T(b)](a)=\frac{\trd(a\bar{b})}{p}=\frac{\trd(\bar{a}b)}{p}=\langle a,b\rangle.
\]
We dualize \(T\).
Since this is an isomorphism of finite rank free \(\bbZ\)-modules, its dual is an isomorphism
\[
T^{\vee}:(\calO^{\vee})^{\vee} \to \frakP^{\vee},\quad T^{\vee}(f)=f\circ T.
\]
So for \(b\in \frakP\), we have \(T^{\vee}(f)(b)=f(T(b))\).
Now the evaluation isomorphism \(\rmop{ev}_{\calO}:\calO \to (\calO^{\vee})^{\vee}\), \(a\mapsto \rmop{ev}_{a}\), where \(\rmop{ev}_{a}(\lambda)=\lambda(a)\) for \(\lambda\in \calO^{\vee}\) is an isomorphism of \(\bbZ\)-modules.
Then,
\[
\Phi=T^{\vee}\circ \rmop{ev}_{\calO}:\calO\to \frakP^{\vee}
\]
is an isomorphism, and for \(b\in \frakP\),
\[
\Phi(a)(b)=T^{\vee}(\rmop{ev}_{a})(b)=\rmop{ev}_{a}(T(b))=[T(b)](a)=\langle a,b\rangle.
\]
So we get the isomorphism \(\Phi:\calO \to \frakP^{\vee}\).

For clarity, if \(a\in \calO\), we denote \(\Phi(a)\) by \(\Phi_{a}\).
Then, we have
\begin{align*}
\langle a,\frakP\rangle\subset q\bbZ
&\quad\Leftrightarrow\quad \Phi_{a}(\frakP)\subset q\bbZ\\
&\quad\Leftrightarrow\quad \Phi_{a}\in q\frakP^{\vee}\\
&\quad\Leftrightarrow\quad \Phi_{a}=\Phi_{qa_{0}} ~\text{for some}~a_{0}\in \calO\\
&\quad\Leftrightarrow\quad a=qa_{0} ~\text{for some}~a_{0}\in \calO\\
&\quad\Leftrightarrow\quad a\in q\calO.\qedhere
\end{align*}
\end{proof}

\begin{proposition}
The restriction \(\eta_{h}:\Lambda\to \bbZ\) is primitive, that is, \(\eta_{h}(\Lambda)=\bbZ\).
\end{proposition}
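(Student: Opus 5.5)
The plan is to show that the image \(\eta_{h}(\Lambda)=d\bbZ\) forces \(d=1\), arguing by contradiction with the perfect pairing \(\langle -,-\rangle:\calO\times\frakP\to\bbZ\) proved just above. First note that \(\eta_{h}(\Lambda)\) is a subgroup of \(\bbZ\), hence equal to \(d\bbZ\) for some \(d\geq 0\). We must show that no prime \(q\) divides every value of \(\eta_{h}\). (The case \(d=0\) is impossible, since \(\eta_{h}(1,0,0)=t>0\).)

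Suppose a prime \(q\) divides every value of \(\eta_{h}\). Evaluating on \((1,0,0)\) and \((0,0,1)\) gives \(q\mid t\) and \(q\mid s\). Evaluating on \((0,b,0)\) for \(b\in\frakP\) gives
\[
\eta_{h}(0,b,0)=-\tfrac{1}{p}\trd(\bar{r}b)=-\langle r,b\rangle\in q\bbZ
\quad\text{for all } b\in\frakP .
\]
Since \(r\in\calO\), the preceding proposition (applied with the integer \(q\) and \(u=r\)) yields \(r\in q\calO\), say \(r=qr_{0}\) with \(r_{0}\in\calO\).

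Now use unimodularity of \(h\):
\[
1=st-\nrd(r)=st-q^{2}\nrd(r_{0}).
\]
The right-hand side is divisible by \(q\), since \(q\mid s\), \(q\mid t\) and \(\nrd(r_{0})\in\bbZ\). This is a contradiction. Hence \(d=1\), i.e.\ \(\eta_{h}(\Lambda)=\bbZ\).

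There is no real obstacle here. The only delicate point is making sure that \(\trd(\bar{r}b)/p\) matches the pairing \(\langle r,b\rangle\) exactly, with \(r\) in the first (\(\calO\)) slot and \(b\) in the second (\(\frakP\)) slot. It does, because \(\langle u,b\rangle=\trd(\bar{u}b)/p\) by definition. The needed input is precisely the divisibility characterization in the previous proposition.
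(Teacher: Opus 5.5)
Your proof is correct and follows the paper's argument essentially verbatim: you evaluate \(\eta_{h}\) on \((1,0,0)\), \((0,0,1)\) and \((0,b,0)\), use the perfect pairing to get \(r\in q\calO\), and then contradict \(st-\nrd(r)=1\). The only cosmetic difference is that you take \(q\) prime and use divisibility by \(q\), whereas the paper takes an arbitrary integer \(q\) and concludes from \(1\in q^{2}\bbZ\) that \(q=\pm 1\).
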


\begin{proof}
The image is nontrivial since \(\eta_{h}(1,0,0)=t>0\).
Assume that \(\eta_{h}(\Lambda)\subset q\bbZ\) for some integer \(q\).
Then, from the identity \(\Trd(h^{-1}\calH(a,b,c))=2p\, \eta_{h}(a,b,c)\), we would get
\begin{align*}
t & = \eta_{h}(1,0,0)\in q\bbZ,\\
s & = \eta_{h}(0,0,1)\in q\bbZ,\\
-\langle r,b\rangle & = \eta_{h}(0,b,0)\in q\bbZ\quad (b\in \frakP).
\end{align*}
The last identity means that \(\langle r,\frakP\rangle \subset q\bbZ\).
But this holds if and only if \(r\in q\calO\).
So we conclude the existence of \(r_{0}\in \calO\) and \(s_{0},t_{0}\in \bbZ\) such that \(s=qs_{0}\), \(r=qr_{0}\) and \(t=qt_{0}\).
Then,
\[
1=st-\nrd(r)=q^{2}\bigl(s_{0}t_{0}-\nrd(r_{0})\bigr)\in q^{2}\bbZ,
\]
which implies \(q=\pm 1\), so \(\eta_{h}(\Lambda)=\bbZ\).
\end{proof}

\subsection{The Gram determinant}

For a bilinear form \(\beta\), we define its \emph{determinant} by
\[
\Delta(L,\beta)=\det(\beta(e_{i},e_{j})).
\]
If \(q:V\to \bbQ\) is a quadratic form, we let \(b_{q}\) be the half-polar form, so that \(q(v)=b_{q}(v,v)\).
Then we define
\[
\Delta(L,q)=\det(b_{q}(e_{i},e_{j})).
\]

\begin{proposition}
For \(\Lambda=\bbZ\oplus \frakP\oplus \bbZ\) as before, we have
\[
\Delta(\calO,\nrd)=\Delta(\frakP,\nrd/p)=\frac{p^{2}}{16},\quad
\Delta(\Lambda,Q)=-\frac{p^{4}}{64}.
\]
\end{proposition}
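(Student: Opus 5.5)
Three determinants are needed; I compute them in order, using that scaling a form by c scales a rank-n Gram determinant by c^n and an index-k sublattice by k^2.

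\emph{Step 1: \(\Delta(\calO,\nrd)=p^{2}/16\).}
The half-polar form of \(\nrd\) is \(b_{\nrd}(x,y)=\tfrac12\trd(x\bar y)\). Its Gram matrix is one half of the Gram matrix of the pairing \((x,y)\mapsto\trd(x\bar y)\). The map \(y\mapsto\bar y\) is a \(\bbZ\)-automorphism of \(\calO\) of determinant \(\pm1\), so the latter Gram determinant equals, up to sign, \(\det(\trd(e_ie_j))\). This is the discriminant of \(\calO\), namely the square of the reduced discriminant, so it is \(p^{2}\); this is the same Voight input used in the codifferent proposition. The sign is positive because \(\nrd\) is positive definite. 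Hence \(\Delta(\calO,\nrd)=2^{-4}p^{2}=p^{2}/16\).

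As a cross-check, one can instead use \([\calO^{\#}:\calO]=[\frakP^{-1}:\calO]=[\calO:\frakP]=p^{2}\). The Gram determinant of the trace pairing in absolute value equals \([\calO^{\#}:\calO]\), again giving \(p^{2}\).

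\emph{Step 2: \(\Delta(\frakP,\nrd/p)=p^{2}/16\).}
The lattice \(\frakP\) has index \(p^{2}\) in \(\calO\), as shown in the section on the ramified ideal. So \(\Delta(\frakP,\nrd)=(p^{2})^{2}\cdot\Delta(\calO,\nrd)=p^{6}/16\). Dividing the form by \(p\) on rank four divides the determinant by \(p^{4}\), giving \(p^{2}/16\).

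Alternatively, left multiplication by \(\pi\) maps \(\calO\) isomorphically onto \(\frakP\) and multiplies \(\nrd\) by \(\nrd(\pi)=p\). This makes \((\frakP,\nrd/p)\) isometric to \((\calO,\nrd)\) outright, which I would present as the cleanest argument.

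\emph{Step 3: \(\Delta(\Lambda,Q)=-p^{4}/64\).}
The form \(Q(a,b,c)=\nrd(b)/p-pac\) is an orthogonal sum of \((\frakP,\nrd/p)\) and the binary form \(-pac\) on \(\bbZ^{2}\). The half-polar Gram matrix of \(-pac\) is \(\begin{psmallmatrix}0&-p/2\\-p/2&0\end{psmallmatrix}\), with determinant \(-p^{2}/4\). Multiplying,
\[
\Delta(\Lambda,Q)=\frac{p^{2}}{16}\cdot\Bigl(-\frac{p^{2}}{4}\Bigr)=-\frac{p^{4}}{64}.
\]

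The main obstacle is Step 1: correctly citing that the discriminant of a maximal order in \(B_{p,\infty}\) is \(p^{2}\), and tracking the factor \(2^{-4}\) that comes from the half-polar convention.
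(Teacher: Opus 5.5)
Your proposal is correct and follows essentially the same route as the paper: it writes the half-polar form of \(\nrd\) as \(\tfrac12\trd(x\bar y)\), uses discriminant \(p^{2}\) and the determinant \(\pm1\) of conjugation, then the isometry \(u\mapsto\pi u\), then an orthogonal splitting for \(Q\). The only difference is that you split \(Q\) directly as \((\frakP,\nrd/p)\perp(\bbZ^{2},-pac)\) instead of passing through \((\calH(\Lambda),f)\) and dividing by \(p^{6}\), which is slightly shorter bookkeeping of the same computation.
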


\begin{proof}
For \(u,v\in B\), we have \(\nrd(u+v)-\nrd(u)-\nrd(v)=\trd(u\bar{v})\), so
\[
b_{\nrd}(u,v)=\frac{1}{2}\trd(u\bar{v}).
\]
Let \((e_{i})_{1\leq i\leq 4}\) be a \(\bbZ\)-basis of \(\calO\) and let \(T=(\trd(e_{i}e_{j}))_{i,j}\).
Since \(\calO\) is a maximal order, its reduced discriminant is \(p\), the squarefree discriminant of \(B\), and the nonreduced discriminant of \(\calO\), which is equal to \(|\det T|\), is \(p^{2}\).
So \(|\det T|=p^{2}\).
To obtain the determinant of \(b_{\nrd}\) as above, we need conjugation.
This is a linear automorphism, and an involution, let \(J\in \rmop{GL}_{4}(\bbZ)\) denote its matrix, where \(J^{2}=I_{4}\).
Hence, \(\det J=\pm 1\), and by definition, \(\bar{e}_{j}=\sum_{k} J_{kj}e_{k}\).
Now, we see that
\[
|\det(b_{\nrd}(e_{i},e_{j}))|=\bigl|\det\bigl(\tfrac{1}{2}TJ\bigr)\bigr|
=2^{-4} \, |\det T|\, |\det J| = \frac{p^{2}}{16}.
\]
Since the reduced norm is positive definite, this determinant is \(p^{2}/16\).
The map
\[
(\calO,\nrd) \xrightarrow{u\mapsto \pi u} (\frakP,\nrd/p)
\]
is a \(\bbZ\)-linear isometry:
it is bijective, since \(\frakP=\pi\calO\) and \(\nrd(\pi u)/p=\nrd(\pi)\nrd(u)/p=\nrd(u)\).
In the bases \((e_{i})\) and \((\pi e_{i})\), the Gram matrices are equal.

For the last computation, recall that we have an isomorphism of \(\bbZ\)-modules \(\calH:\Lambda \to \calH(\Lambda)\).
We also established that
\[
f(\calH(v))=p\, Q(v),\quad \forall v\in \Lambda.
\]
Since \(\rmop{rank}_{\bbZ}(\Lambda)=6\), we immediately get
\[
\Delta(\calH(\Lambda),f)=p^{6}\, \Delta(\Lambda,Q).
\]
Now,
\[
L_{\calH}=\calH(\Lambda)=\left\{
\begin{pmatrix}
pa & b \\ \bar{b} & pc
\end{pmatrix}
: a,c\in \bbZ,\, b\in \frakP
\right\}.
\]
Since \(f=-{\det}_{\rmH}\),
\[
f\begin{pmatrix}
pa & b \\ \bar{b} & pc
\end{pmatrix}=
\nrd(b)-(pa)(pc),
\]
so we see that we have an orthogonal decomposition of quadratic lattices
\[
(L_{\calH},f) \cong (\frakP,\nrd)\perp \bigl(p\bbZ\oplus p\bbZ,(x,z)\mapsto -xz\bigr).
\]
Since we already computed \((\frakP,\nrd/p)=p^{2}/16\), the scaling implies
\[
\Delta(\frakP,\nrd)=p^{4}\frac{p^{2}}{16}=\frac{p^{6}}{16}.
\]
For the hyperbolic plane, in the standard basis of \(\bbZ\oplus\bbZ\), the half-polar Gram matrix is \(\begin{psmallmatrix} 0 & -1/2\\ -1/2 & 0\end{psmallmatrix}\), so \(\Delta(\bbZ^{2},(x,z)\mapsto -xz)=-1/4\).
Since our lattice \(p\bbZ\oplus p\bbZ\) has index \(p^{2}\), a simple change of bases shows that the Gram determinant is scaled by \(p^{4}\).
Hence,
\[
\Delta(p\bbZ\oplus p\bbZ,(x,z)\mapsto -xz)=-\frac{p^{4}}{4}.
\]
We conclude that
\[
\Delta(L_{\calH},f)=\frac{p^{6}}{16}\cdot \left(-\frac{p^{4}}{4}\right)=-\frac{p^{10}}{64}.
\]
Dividing by \(p^{6}\) gives the result.
\end{proof}

Finally, we bring this back to the quadratic lattices \((\scrL_{h},q_{h})\) and \((K_{h},Q)\).

\begin{proposition}
Let \(w=(s,pr,t)\in \Lambda\).
Then, \(K_{h}=\Lambda \cap w^{\perp}\), and \(K_{h}\oplus \bbZ w\) is an index-\(2\) sublattice of \(\Lambda\), and
\[
\Delta(\scrL_{h},q_{h})=\Delta(K_{h},Q)=\frac{p^{3}}{16}.
\]
\end{proposition}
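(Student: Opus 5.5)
The plan is to relate $\eta_{h}$ to the half-polar form $b_{Q}$ on $\Lambda$. Then $K_{h}$ becomes an orthogonal complement, and its determinant follows from the previous proposition by the standard index formula.

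\emph{Step 1: $\eta_{h}$ is pairing with $w$.} First, $w=(s,pr,t)$ lies in $\Lambda$, since $pr=\pi(-\pi r)\in\frakP$. From $\nrd(u+v)-\nrd(u)-\nrd(v)=\trd(u\bar v)$ we get, for $v=(a,b,c)$ and $v'=(a',b',c')$,
\[
b_{Q}(v,v')=\frac{\trd(b\bar b')}{2p}-\frac{p}{2}(ac'+a'c).
\]
Taking $v=w$ gives $b_{Q}(w,(a,b,c))=\frac{\trd(pr\bar b)}{2p}-\frac{p}{2}(sc+ta)$. Since $\trd(r\bar b)=\trd(\overline{r\bar b})=\trd(b\bar r)=\trd(\bar r b)$, this simplifies to
\[
b_{Q}(w,v)=-\frac{p}{2}\,\eta_{h}(v)\qquad(v\in\Lambda).
\]
Hence $K_{h}=\ker\eta_{h}=\Lambda\cap w^{\perp}$.

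\emph{Step 2: the values $\eta_{h}(w)$ and $Q(w)$.} We compute
\[
\eta_{h}(w)=ts+st-\tfrac{1}{p}\trd(\bar r\,pr)=2st-2\nrd(r)=2,
\]
using $st-\nrd(r)=1$. Therefore $Q(w)=b_{Q}(w,w)=-\tfrac{p}{2}\cdot 2=-p$.

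\emph{Step 3: the index.} By the primitivity proposition, $\eta_{h}$ induces an isomorphism $\Lambda/K_{h}\cong\bbZ$. Under it, the image of $K_{h}\oplus\bbZ w$ is $\eta_{h}(w)\bbZ=2\bbZ$. The sum is direct because $w\notin K_{h}$. So $\Lambda/(K_{h}\oplus\bbZ w)\cong\bbZ/2$, and the index is $2$.

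\emph{Step 4: the determinant.} Gram determinants scale by the square of the index, so $\Delta(K_{h}\oplus\bbZ w,Q)=4\,\Delta(\Lambda,Q)=-p^{4}/16$. The decomposition is orthogonal, so the left side equals $\Delta(K_{h},Q)\cdot Q(w)=-p\,\Delta(K_{h},Q)$. Solving gives $\Delta(K_{h},Q)=p^{3}/16$. Finally, $\theta_{h}$ is an isometry $(\scrL_{h},q_{h})\cong(K_{h},Q)$, so the two determinants agree.

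The only delicate point I expect is the sign and conjugation bookkeeping in Step 1, namely $\trd(r\bar b)$ versus $\trd(\bar r b)$ and the factor $-p/2$. Everything after that is formal.
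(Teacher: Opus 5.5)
Your proof is correct and follows the paper's argument step by step: you identify $\eta_h$ with $-\tfrac{2}{p}\,b_Q(\cdot,w)$, get $\eta_h(w)=2$ and $Q(w)=-p$, use primitivity of $\eta_h$ for the index, and then apply the index-squared and orthogonal-sum determinant formulas. The only cosmetic difference is the order of computation. You derive $b_Q(w,v)=-\tfrac{p}{2}\eta_h(v)$ directly from the coordinate formula for $Q$ and compute $\eta_h(w)=2$ first, whereas the paper goes through $\calH(w)=ph$ and the identity $\Trd(h^{-1}X)=-4\,b_f(h,X)$, and computes $Q(w)=-p$ first.
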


Let \(w=(s,pr,t)\), where \(pr\in p\calO=\frakP^{2}\subset \frakP\).
Then, \(\calH(w)=p\, h\).
Since \(Q=p^{-1}f\circ \calH\) and \(f(h)=-1\), we have
\[
Q(w)=p^{-1}\, f(\calH(w))=p^{-1}f(ph)=pf(h)=-p.
\]
We can relate the half-polar forms of \(Q\) and \(f\), and in particular, for \(w\) as above and any \(v\in \Lambda_{\bbQ}\), we have
\[
b_{Q}(v,w)=\frac{1}{p} b_{f}(\calH(v),\calH(w))=b_{f}(\calH(v),h).
\]
Since \(-4\, b_{f}(\calH(v),h)=2p\, \eta_{h}(v)\), we conclude that
\[
b_{Q}(v,w)=-\frac{p}{2}\eta_{h}(v).
\]
It follows that
\[
v\in K_{h}
\quad\Leftrightarrow\quad
\eta_{h}(v)=0
\quad\Leftrightarrow\quad
b_{Q}(v,w)=0.
\]
Thus, \(K_{h}=\Lambda \cap w^{\perp}\).
Since \(b_{Q}(w,w)=Q(w)=-p\), the expression of \(b_{Q}(v,w)\) above implies \(-p=-(p/2)\eta_{h}(w)\), hence \(\eta_{h}(w)=2\).

Since \(\eta_{h}:\Lambda \to \bbZ\) is surjective, we have an isomorphism
\[
\Lambda/K_{h}  \stosim \bbZ,\quad
v+K_{h} \mapsto \eta_{h}(v).
\]
Since \(nw\in K_{h}\) implies \(n=0\), the sum \(K_{h}+\bbZ w=K_{h}\oplus \bbZ w\) is direct.
Hence, for any \(z\in \bbZ\), \(\eta_{h}(zw+K_{h})=z\eta_{h}(w)=2z\), so \(\eta_{h}(K_{h}\oplus\bbZ w)=2\bbZ\).
It follows that
\[
[\Lambda:K_{h}\oplus\bbZ w]=2.
\]
We can now complete the proof.
Since \(K_{h}\oplus \bbZ w\) is an index-\(2\) sublattice of \(\Lambda\), we have
\[
\Delta(K_{h}\oplus \bbZ w,Q)=2^{2}\Delta(\Lambda,Q)=4\,\Delta(\Lambda,Q).
\]
But since the sum is orthogonal, \(K_{h}\oplus \bbZ w=K_{h} \perp \bbZ w\), the Gram matrix is block diagonal, so
\[
\Delta(K_{h}\oplus \bbZ w,Q) = \Delta(K_{h},Q) \cdot \Delta(\bbZ w,Q) 
=\Delta(K_{h},Q) \cdot Q(w)
=-p\, \Delta(K_{h},Q).
\]
We conclude that
\[
\Delta(K_{h},Q)=\frac{\Delta(K_{h}\oplus \bbZ w,Q)}{-p}=\left(-\frac{4}{p}\right)\Delta(\Lambda,Q)=\left(-\frac{4}{p}\right)\left(-\frac{p^{4}}{64}\right)=\frac{p^{3}}{16}.
\]
Since \(\Delta(\scrL_{h},q_{h})=\Delta(K_{h},Q)\), this completes the proof.

\subsection{Reconstruction of an isogeny}

By the definition of Hermite's constant, we have for any positive definite rank-five quadratic lattice
\[
\min_{v\in L\setminus\{0\}} q(v) \leq \gamma_{5} \,\ \Delta(L,q)^{1/5},\quad\text{where}~ \gamma_{5}=2^{3/5}.
\]
(See \cite{Cassels1959}, page 332 for the value of \(\gamma_{5}\) and IX.7 for the definition of Hermite's constant.)

Our construction immediately yields

\begin{corollary}
There exist a vector \(x\in \scrL_{h}\) and a positive integer \(N\in \bbZ_{>0}\) such that
\[
x^{\dagger_{h}}=x,\quad
\Trd(x)=0,\quad
x^{2}=pN\, I_{2},\quad
N=q_{h}(x)\leq \left(\frac{p^{3}}{2}\right)^{1/5}<p^{3/5}<p.
\]
\end{corollary}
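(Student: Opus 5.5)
The plan is to apply Hermite's bound to the positive definite rank-five lattice \((\scrL_{h},q_{h})\) and read off the required properties from the results already proved about \(S_{h}\).

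First, \((\scrL_{h},q_{h})\) is an integral quadratic lattice: \(q_{h}(x)=q_{0}(x)/p\in\bbZ\) for \(x\in\scrL_{h}\). It has rank five, because \(\dim_{\bbQ}S_{h}=5\) and \(\scrL_{h}\) is a full lattice in \(S_{h}\). It is positive definite, because \(q_{0}\) is. By the previous proposition, \(\Delta(\scrL_{h},q_{h})=p^{3}/16\).

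Applying Hermite's inequality with \(\gamma_{5}=2^{3/5}\) gives a nonzero \(x\in\scrL_{h}\) with
\[
q_{h}(x)\leq 2^{3/5}\left(\frac{p^{3}}{16}\right)^{1/5}=\left(\frac{2^{3}p^{3}}{2^{4}}\right)^{1/5}=\left(\frac{p^{3}}{2}\right)^{1/5}.
\]
Set \(N=q_{h}(x)\). Since \(x\neq0\) and \(q_{h}\) is positive definite and integer valued, \(N\) is a positive integer.

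Next, since \(x\in S_{h}\), we have \(x^{\dagger_{h}}=x\) and \(\Trd(x)=0\) by the definition of \(S_{h}\). Property (1) of the first proposition gives \(x^{2}=q_{0}(x)I_{2}=pq_{h}(x)I_{2}=pN\,I_{2}\).

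Finally, the inequality \((p^{3}/2)^{1/5}<p^{3/5}\) is immediate. The inequality \(p^{3/5}<p\) holds because \(p>1\).

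There is no real obstacle here. The only points needing care are the arithmetic identity \(2^{3/5}\cdot 16^{-1/5}=2^{-1/5}\) and the remark that \(N\geq1\) follows from integrality together with positivity.
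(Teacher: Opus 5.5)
Your proposal is correct and follows the paper's proof: apply Hermite's inequality with \(\gamma_{5}=2^{3/5}\) to \((\scrL_{h},q_{h})\), which has determinant \(p^{3}/16\), then read off the remaining properties from the definition of \(S_{h}\) and the identity \(x^{2}=q_{0}(x)I_{2}\). You also spell out two points the paper leaves implicit: that \(N\geq 1\) follows from integrality together with positive definiteness, and the arithmetic \(2^{3/5}\cdot 16^{-1/5}=2^{-1/5}\).
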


\begin{proof}
The first two follow from the definition of \(\scrL_{h}\).
We established that \(x^{2}=q_{0}(x)I_{2}=p\, q_{h}(x)I_{2}\).
By Hermite's theorem, there exists a nonzero vector \(x\in \scrL_{h}\) such that \(q_{h}(x)=N\) is a positive integer, with
\[
1\leq N=q_{h}(x)\leq 2^{3/5} \left(\frac{p^{3}}{16}\right)^{1/5}.\qedhere
\]
\end{proof}

We can complete the proof of the main theorem now.

\begin{proof}[Proof of the main theorem]
Let \(x\in \scrL_{h}\) and \(N\) be as in the corollary.
Recall that \(\scrL_{h}=\rmop{M}_{2}(\frakP)\cap S_{h}\).
Now recall the diagram
\[
\begin{tikzcd}[row sep=3em,column sep=6em]
\rmop{Hom}_{\bfk}(A,A^{\sigma}) \dar[swap]{\phi \mapsto \iota^{\sigma} \phi \iota^{-1}}\rar{\phi \mapsto F\circ \phi} & \rmop{End}_{\bfk}(A) \dar{v \mapsto \iota v \iota^{-1}} \\
\rmop{M}_{2}(\calO) \rar[swap]{\gamma \mapsto \Pi \gamma} & \rmop{M}_{2}(\calO).
\end{tikzcd}
\]
The vertical maps are isomorphisms and the lower map is injective with image \(\rmop{M}_{2}(\frakP)\).
We reverse the arrows:
\[
\begin{tikzcd}[row sep=3em,column sep=6em]
\rmop{Hom}_{\bfk}(A,A^{\sigma}) \rar{\phi \mapsto F\circ \phi} & \rmop{End}_{\bfk}(A) \\
\rmop{M}_{2}(\calO)  \uar{\gamma\mapsto (\iota^{\sigma})^{-1}\gamma \iota}& \rmop{M}_{2}(\calO)\uar[swap]{x\mapsto\iota^{-1}x\iota}.\lar
\end{tikzcd}
\]
The matrix \(x\in \rmop{M}_{2}(\frakP)\) in the bottom right corner determines an integral matrix \(\gamma=\Pi^{-1}x\in \rmop{M}_{2}(\calO)\) in the bottom left corner, and hence a homomorphism
\[
\phi=(i^{\sigma})^{-1}\circ \gamma\circ \iota:A \to A^{\sigma}
\]
in the top left corner.
On the other hand, define \(\xi=\iota^{-1}x\iota:A \to A\), in the top right corner.
With \(F=F_{A^{\sigma}}:A^{\sigma} \to A\), going back to the first diagram, we now have
\[
\iota(F\phi)\iota^{-1}=x,\quad
\xi = F\phi.
\]
So \(x\) is the matrix of \(\xi=F\phi\).

Finally, we claim that
\[
\xi^{\dagger}=\xi,\quad
\xi^{2}=[pN]_{A}.
\]
The first identity follows immediately from the diagram
\[
\begin{tikzcd}
\rmop{End}_{\bfk}(A) \dar[swap]{\Psi} \rar{\dagger_{\lambda}} &\rmop{End}_{\bfk}(A) \dar{\Psi}\\
\rmop{M}_{2}(B) \rar[swap]{\dagger_{h}} & \rmop{M}_{2}(B)
\end{tikzcd}
\quad
\Psi(u^{\dagger_{\lambda}})=\Psi(u)^{\dagger_{h}}.
\]
Indeed, \(x\) is precisely the matrix of \(F\phi\in \rmop{End}_{\bfk}(A)\) in \(\iota\)-coordinates.
The diagram precisely implies that \(x^{\dagger_{h}}=x\) is equivalent to \((F\phi)^{\dagger}=F\phi\).

We already established a matrix form of the multiplier equation:
\[
x^{\dagger_{h}}x=pNI_{2}
\quad\Leftrightarrow\quad
\gamma^{*}h^{\sigma}\gamma=Nh
\quad\Leftrightarrow\quad
\phi^{\vee}\lambda^{\sigma}\phi=N\lambda.
\]
Since \(x^{\dagger_{h}}=x\), we have \(x^{2}=x^{\dagger_{h}}x=pNI_{2}\).
We thus conclude that
\[
\phi^{\dagger}\phi =[N]_{A}.
\]

It is now straightforward that \(\phi\) is an isogeny.
For every commutative \(\bfk\)-algebra \(R\) and \(P\in (\ker\phi)(R)\), we have \([N]_{A}(P)=\phi^{\dagger}\phi(P)=0\), so \(\ker\phi\subset A[N]=\ker[N]_{A}\) as closed subgroup schemes, and \(A[N]\) is finite because \([N]_{A}\) is an isogeny (\cite{Milne1986}, Theorem 8.2), thus \(\ker\phi\) is finite.
Since \(A\) and \(A^{\sigma}\) both have dimension \(2\), \(\phi\) is an isogeny (\cite{Milne1986}, Proposition 8.1(c)).
Taking degrees in \(\phi^{\vee}\lambda^{\sigma}\phi=N\lambda\), since \(\lambda\) and \(\lambda^{\sigma}\) are principal, they have degree \(1\), and \(\deg\phi^{\vee}=\deg\phi\) , so
\[
(\deg \phi)^{2}=\deg[N]_{A}=N^{2\dim A} = N^{4}.
\]
The latter follows from \cite{Milne1986}, Theorem 8.2.
We conclude that \(\deg\phi=N^{2}\).
The finite extension \(\bfk(A)/\phi^{*}\bfk(A^{\sigma})\) has inseparable degree a power of \(p\) dividing \([\bfk(A):\phi^{*}\bfk(A^{\sigma})]=\deg\phi\).
Since \(N<p\), we have \(p\nmid N\), so \(p\nmid N^{2}\), hence that degree is \(1\) and we conclude that \(\phi\) is separable.
\end{proof}

\bibliographystyle{alpha}
\bibliography{biblio} 


\end{document}